\crefname{thm}{Theorem}{Theorems}
\newcolumntype{C}[1]{>{\centering\arraybackslash}m{#1}}
\def\bbl@set@language#1{%
	\edef\languagename{%
		\ifnum\escapechar=\expandafter`\string#1\@empty
		\else\string#1\@empty\fi}%
	\@ifundefined{babel@language@alias@\languagename}{}{%
		\edef\languagename{\@nameuse{babel@language@alias@\languagename}}%
	}%
	\select@language{\languagename}%
	\expandafter\ifx\csname date\languagename\endcsname\relax\else
	\if@filesw
	\protected@write\@auxout{}{\string\select@language{\languagename}}%
	\bbl@for\bbl@tempa\BabelContentsFiles{%
		\addtocontents{\bbl@tempa}{\xstring\select@language{\languagename}}}%
	\bbl@usehooks{write}{}%
	\fi
	\fi}
\newcommand{\DeclareLanguageAlias}[2]{%
	\global\@namedef{babel@language@alias@#1}{#2}%
}
\def\th@plain{%
	\thm@notefont{}
	\itshape 
}
\def\th@definition{%
	\thm@notefont{}
	\normalfont 
}
\newtheorem{definition}{Definition}
\newtheorem{lemma}{Lemma}
\newtheorem{corollary}{Corollary}
\newtheorem{example}{Example}
\newtheorem{remark}{Remark}
\theoremstyle{plain}
\newcommand{\mb}[1]{{\mathbb{#1}}}
\newcommand{\vertSet}{\mathbb{V}}
\newcommand{\ham}{H}
\newcommand{\hamz}{G}
\newcommand{\hamt}{R}
\newcommand{\comp}[3]{[#1]_{#2,#3}}
\newcommand{\vcomp}[2]{[#1]_{#2}}
\newcommand{\mul}{{\mathbb{M}}} 
\newcommand{\sublet}[1]{\mathbbm{m}_{#1}}
\newcommand{\bmul}[3]{B^{(#1)}_{#2;#3}}
\newcommand{\bmulz}[3]{A^{(#1)}_{#2;#3}}
\newcommand{\bz}{A}
\newcommand{\diffb}{D}
\begin{document}
\begin{frontmatter}
\title{Cospectrality preserving graph modifications and eigenvector properties via walk equivalence of vertices}

\author[1]{C. V. Morfonios\fnref{fn1}}%

\author[1]{M. Pyzh\fnref{fn1}}%

\author[1]{M. Röntgen\corref{cor1}%
	\fnref{fn1}}
\ead{mroentge@physnet.uni-hamburg.de}

\author[1,2]{P. Schmelcher}
\address[1]{%
	Zentrum für Optische Quantentechnologien, Fachbereich Physik, Universität Hamburg, Luruper Chaussee 149, 22761 Hamburg, Germany
}%
\address[2]{%
	The Hamburg Centre for Ultrafast Imaging, Universität Hamburg, Luruper Chaussee 149, 22761 Hamburg, Germany
}%
\cortext[cor1]{Corresponding author}
\fntext[fn1]{These three authors contributed equally.}
\begin{abstract}
Originating from spectral graph theory, cospectrality is a powerful generalization of exchange symmetry and can be applied to all real-valued symmetric matrices.
Two vertices of an undirected graph with real edge weights are cospectral if and only if the underlying weighted adjacency matrix $M$ fulfills $\comp{M^k}{u}{u} = \comp{M^k}{v}{v}$ for all non-negative integer $k$, and as a result any eigenvector $\phi$ of $M$ has (or, in the presence of degeneracies, can be chosen to have) definite parity on $u$ and $v$.
We here show that the powers of a matrix with cospectral vertices induce further local relations on its eigenvectors, and also can be used to design cospectrality preserving modifications.
To this end, we introduce the concept of \emph{walk equivalence} of cospectral vertices with respect to \emph{walk multiplets} which are special vertex subsets of a graph.
Walk multiplets allow for systematic and flexible modifications of a graph with a given cospectral pair while preserving this cospectrality.
The set of modifications includes the addition and removal of both vertices and edges, such that the underlying topology of the graph can be altered.
In particular, we prove that any new vertex connected to a walk multiplet by suitable connection weights becomes a so-called unrestricted substitution point (USP), meaning that any arbitrary graph may be connected to it without breaking cospectrality.
Also, suitable interconnections between walk multiplets within a graph are shown to preserve the associated cospectrality.
Importantly, we demonstrate that the walk equivalence of cospectral vertices $u,v$ imposes a local structure on every eigenvector $\phi$ obeying $\phi_{u} = \pm \phi_{v} \ne 0$ (in the case of degeneracies, a specific choice of the eigenvector basis is needed).
Our work paves the way for flexibly exploiting hidden structural symmetries in the design of generic complex network-like systems.
\end{abstract}

\begin{keyword}
	Cospectrality\sep symmetric matrices\sep structure of eigenvectors\sep matrix powers\sep walk equivalence
	\MSC[2010] 05C50\sep 15A18 \sep 05C22\sep 81R40\sep 81V55
\end{keyword}
\end{frontmatter}

\section{Introduction}
\label{sec:intro}

\begin{figure*} [t!]
\centering
\includegraphics[max size={0.8\textwidth}{\textheight}]{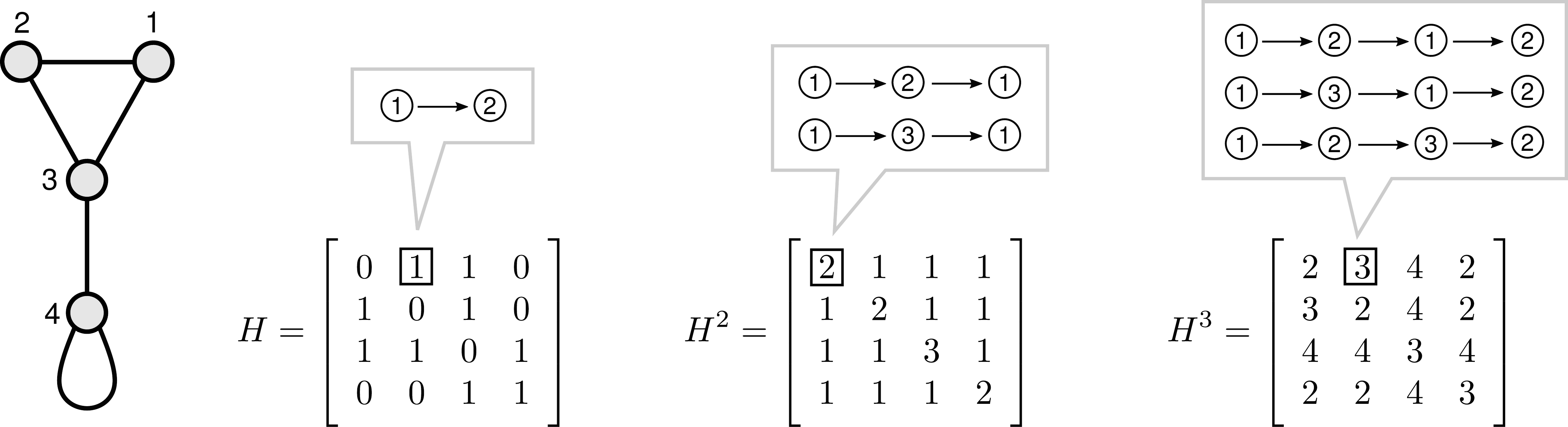}
\caption{
An undirected, unweighted graph with four vertices represented by a $4 \times 4$ symmetric matrix $H$, and the interpretation of its powers $H^k$ in terms of ``walks'': 
The matrix element $\comp{H^k}{i}{j}$ counts the number of distinct walks of length $k$ from vertex $i$ to $j$, as illustrated for $k = 1,2,3$.
}
\label{fig:WalkInterpretation}
\end{figure*}

Eigenvalue problems of real symmetric matrices are ubiquitous in many fields of science.
Special examples are graph theory in mathematics as well as properties of quantum systems in physics.
A first step in dealing with such problems is often based on a symmetry analysis in terms of permutation matrices that commute with the matrix $\ham$ at hand.
Given a set of such permutation matrices, a symmetry-induced block-diagonalization of $\ham$ is possible and powerful statements about the eigenvectors of $\ham$ can be made \cite{Dresselhaus2008GroupTheoryApplicationPhysics,Francis2019LAIA577287GeneralEquitableDecompositionsGraphs}.
The permutation symmetries of a matrix can be conveniently visualized in the framework of \emph{graphs}.
A graph representing a matrix $\ham \in \mathbb{R}^{N \times N}$ is a collection of $N$ vertices connected by edges with weights $\ham_{i,j}$, like the one shown in \cref{fig:WalkInterpretation}.
Due to this mapping between a matrix and the graph representing it we denote both the graph and the corresponding matrix with the same symbol $\ham$.
In this graphical picture, the action of a permutation matrix $P$ corresponds to permuting the vertices of the graph, along with the ends of the edges connected to them.
$\ham$ is then transformed to $\ham' = P \ham P^{-1}$, and if $P$ and $\ham$ commute, $P \ham = \ham P$, then the graph remains the same after the permutation, i.\,e. $\ham' = \ham$.
In particular, if $P$ \emph{exchanges} two vertices $u$ and $v$, while permuting the remaining vertices arbitrarily, its commutation with $\ham$ means that the $u$-th and $v$-th row of $\ham$ coincide, ${\ham}_{u,j} = {\ham}_{v,j}$ for all $j \in [\![1,N]\!]  \equiv \{ 1,2,\dots,N \}$ (and the same for the $u$-th and $v$-th column, since $\ham$ is symmetric).
It can then be shown that the $u$-th and $v$-th diagonal elements of any non-negative integer power of $\ham$ coincide,
\begin{equation} \label{eq:cospectralDefinition}
\comp{\ham^{k}}{u}{u} = \comp{\ham^{k}}{v}{v} \; \forall \; k \in \mathbb{N},
\end{equation}
and that any eigenvector $\phi$ of $\ham$ has---or, if degenerate to another eigenvector, can be chosen to have---positive or negative parity on $u$ and $v$ \cite{Godsil2017AMStronglyCospectralVertices}, that is, 
\begin{equation} \label{eq:localParity}
 \phi_{u} = \pm \phi_{v}.
\end{equation}
The eigenvector components on the remaining vertices, which are generally not pairwise exchanged by $P$, may have arbitrary components.
Thus \cref{eq:localParity} constitutes a \emph{local} parity of the eigenvectors.
This property is intricately related to the interpretation of powers of $H$ in terms of \emph{walks} \cite{Estrada2015FirstCourseNetworkTheory,Godsil2017AMStronglyCospectralVertices}, which are sequences of vertices connected by edges, on the corresponding graph.
For an unweighted graph (having $H_{i,j} \in \{0,1\}$), the element $\comp{\ham^k}{i}{j}$ counts all possible walks of length $k$ from vertex $i$ to $j$ on the graph.
This is illustrated in \cref{fig:WalkInterpretation} for selected walks of length $1,2,3$.
With this interpretation, \cref{eq:cospectralDefinition}---and thereby also \cref{eq:localParity}---hold if the graph has an equal number of ``closed'' walks starting and ending on $u$ or $v$, for any walk length $k$.
This is the case, e.\,g., for vertices $1$ and $2$ in the graph of \cref{fig:WalkInterpretation}. 
For weighted graphs (having $H_{i,j} \in \mathbb{R}$), the interpretation of matrix powers in terms of walks is modified by weighing the walks accordingly (see below), with all corresponding results staying valid.

Interestingly, and in many cases counterintuitively, the local parity of eigenvectors of a graph, \cref{eq:localParity}, can be achieved even if $\ham$ \emph{does not commute with any permutation matrix} $P$, as long as \cref{eq:cospectralDefinition} is fulfilled.
Given this condition, the eigenvalue spectra of the two submatrices $\ham \setminus u$ and $\ham \setminus v$, obtained from $\ham$ by deleting vertex $u$ or $v$ from the graph, respectively, coincide, and $u$ and $v$ are said to be \emph{cospectral} \cite{Godsil2017AMStronglyCospectralVertices}.
Originating from spectral graph theory \cite{Schwenk1973PTAAC257AlmostAllTreesAre}, the results of the study of cospectral vertices have so far been applied to the field of quantum information and quantum computing, but also---under the term \emph{isospectral vertices}---to chemical graph theory \cite{Rucker1992JMC9207UnderstandingPropertiesIsospectralPoints,Herndon1974JCD14150CharacteristicPolynomialDoesNot,Herndon1975T3199IsospectralGraphsMolecules}.
In a very recent work \cite{Kempton2020LAaiA594226CharacterizingCospectralVerticesIsospectral}, cospectral vertices have also been linked to so-called ``isospectral reductions'', a concept which allows to transform a given matrix into a smaller version thereof which shares all (or, in special cases, a subset of) the eigenvalues with the original matrix.

Given a graph with cospectral vertices $u$ and $v$, one may ask what kind of changes can be made to it without breaking the cospectrality.
One particularly interesting feature that occurs for some graphs is the presence of so-called \emph{unrestricted substitution points} (USPs), which were introduced in Ref. \cite{Herndon1975T3199IsospectralGraphsMolecules}.
Given a graph $\ham$ with two cospectral vertices $u$ and $v$, a third vertex $c$ is an USP if and only if one can attach an arbitrary subgraph to $c$ without breaking the cospectrality of $u$ and $v$.
While it is a straightforward task to identify all USPs of a given graph, the origin of these special points has been elusive so far.

In this work we shed new light on this phenomenon by introducing the concept of \emph{walk equivalence} of cospectral vertices $u,v$ with respect to a vertex subset of a graph.
In the simplest case of an unweighted graph, two vertices $u$ and $v$ are walk equivalent relative to a vertex subset if the cumulative number of walks from $u$ to this subset equals that from $v$ to this subset, for any walk length.
The vertex subset then corresponds to what we call a \emph{walk multiplet} relative to the pair $u,v$.
The smallest walk multiplets, which we call singlets, consist of a single vertex and are identified with the above mentioned USPs, and we here demonstrate how to create such points in a systematic way.
Specifically, we show that a graph can be extended via any of its walk multiplets by connecting it to a new vertex while preserving the cospectrality of the associated vertex pair.
This procedure can be repeated any number of times with different walk multiplets.
All the newly added vertices turn out to be USPs, thus allowing us to connect arbitrary new graphs exclusively to them without breaking the cospectrality.
Additionally, we show that one can also alter the topology of a graph \emph{without extending it} by modifying the interconnections between two or more walk multiplets.
This provides a systematic way to construct graphs with cospectral vertices but no permutation symmetry, based on breaking existing symmetries by walk multiplet-induced modifications.
The concept of walk equivalence of vertices is further generalized to the case where walks to different subsets of a walk multiplet can be equipped with different weight parameters.

Apart from providing means to modify a graph without breaking the cospectrality, we show that walk multiplets can be used to obtain a substantial understanding of the structure of eigenvectors of general real symmetric matrices with cospectral pairs.
In particular, for a suitably chosen eigenbasis, walk multiplets induce linear scaling relations between eigenvector components on the multiplet vertices, in dependence of the local parity---\cref{eq:localParity}---of the eigenvector on the cospectral vertex pair associated with the multiplet.
As a special case, the eigenvector components vanish on any walk singlet and, by iteration, on any arbitrary new graph connected exclusively to walk singlets.
We believe our work will provide valuable insights into the structure of eigenvectors of generic network-like systems and thereby aid in the design of desired properties.

The paper is structured as follows.
In \cref{sec:multiplets}, we first motivate the concept of walk multiplets as a generalization of USPs, before we define them generally in terms of walks on graphs, and proceed discussing their properties.
In \cref{sec:multipletExtension}, we show how walk multiplets allow for the modification of graphs without breaking vertex cospectrality.
In \cref{sec:multipletsAndEigenvectors}, we apply the concept to derive relations between the components of eigenvectors on walk multiplet vertices, with vanishing components on walk singlets as a special case.
In \cref{sec:generatingCospectralVerticesFromSymmetry}, we use walk multiplets to generate graphs that feature cospectral vertices without having any permutation symmetry.
We conclude the work in \cref{sec:conclusions}.
In the Appendix we provide the proofs of all theorems.

\section{Walk multiplets} \label{sec:multiplets}

As the name suggests, the concept of ``walk multiplets,'' to be developed below, is based on walks along the vertices of a graph.
In particular, as illustrated in \cref{fig:WalkInterpretation}, the entries of powers $\ham^{k}$ can be interpreted in terms of walks \cite{Estrada2015FirstCourseNetworkTheory} on the corresponding graph with $N$ vertices.
Indexing the vertices of the graph by $v_i \in [\![ 1,N ]\!]$, a walk of length $k$ from vertex $v_1$ to vertex $v_{k+1}$ is a sequence 
\begin{equation} \label{eq:walk}
    \alpha_{k}(v_{1},v_{k+1}) = (v_{1},v_{2}),(v_{2},v_{3}),\ldots{},(v_{k},v_{k+1})
\end{equation}
of $k$ (possibly repeated) edges $(v_{i},v_{i+1})$ corresponding to nonzero matrix elements $\ham_{v_i,v_{i+1}}$.
Note that a diagonal element $\ham_{n,n}$ corresponds to a ``loop'' on vertex $n$, that is, an edge connecting $n$ with itself. 
If the entries of $\ham$ are either $0$ or $1$, that is, the graph is \emph{unweighted}, then the element $\comp{H^k}{m}{n}$ equals the number of walks from $m$ to $n$ on the graph.
We leave it like this for now, but will consider walks on general weighted graphs further below.
Throughout this work $\ham = \ham^{\top} \in \mathbb{R}^{N \times N}$ will denote a real symmetric matrix but also the corresponding graph itself, since there is a one-to-one mapping between them for our purposes.

\subsection{Unrestricted substitution points: the simplest case of walk multiplets}
\label{sec:USPs}

Let us introduce the idea of walk multiplets, starting with some preliminary considerations by inspecting the example graph in \cref{fig:exampleUSP}\,(a), adapted from Ref.\,\cite{Herndon1975T3199IsospectralGraphsMolecules}.
As is common in the field of chemical (or molecular) graph theory, this graph is used as a very simple representation of a molecule, with the vertices being atoms of some kind and the edges between them being atom-atom, i.\,e. molecular, bonds.
For simplicity, we consider all bonds to be of the same unit strength, meaning that all edges have the same weight $1$, and all atoms to have zero ``onsite potential'', so there are no loops on vertices (like the one on vertex $4$ in \cref{fig:WalkInterpretation}).

While seeming quite common, this graph has some interesting ``hidden'' properties.
First of all, it has cospectral vertices labeled $u$ and $v$.
This cospectrality does not stem, though, from a corresponding exchange symmetry (permuting vertices $u$ and $v$ with each other).
Indeed, without being symmetric under exchange, the cospectral vertices fulfill \cref{eq:cospectralDefinition}, that is, the number of closed walks from $u$ back to $u$ and from $v$ back to $v$ is the same, for any walk length $k$.
Notably, cospectral vertices go under the name ``isospectral points'' in molecular graph theory.

A second interesting property of the graph in \cref{fig:exampleUSP} is that it has some special vertices, labeled $c$ and $r$, called ``unrestricted substitution points'' (USPs) \cite{Rucker1992JMC9207UnderstandingPropertiesIsospectralPoints,Herndon1975T3199IsospectralGraphsMolecules}, which were already mentioned in \cref{sec:intro}.
Those are vertices to which new vertices or subgraphs may be attached, or which may even be removed completely, without breaking the cospectrality of $u$ and $v$.
This is done in \cref{fig:exampleUSP}\,(b).
Now, let us approach this in terms of walks, and focus on the vertex $c$ of the example for concreteness.
Cospectrality of $u,v$ is preserved when connecting $c$ to the arbitrary new graph $C$, meaning that the number of closed walks from $u$ and $v$ is the same for any walk length also after this modification. 
All additionally created closed walks from $u$ or $v$ which visit the arbitrary subgraph $C$, however, necessarily traverse the USP $c$ on the way. 
This suggests that the number of walks from $u$ to $c$ is the same as from $v$ to $c$, for any walk length---because the possible walk segments within $C$ are evidently the same for walks from $u$ and from $v$.
Indeed, this turns out to be exactly the case:
A vertex $c$ of a graph $H$ with cospectral vertices $u,v$ is an USP if and only if it fulfills $\comp{H^\ell}{u}{c} = \comp{H^\ell}{v}{c}$ for any non-negative integer $\ell$.

\begin{figure*} [t] 
\centering
\includegraphics[max size={0.9\textwidth}]{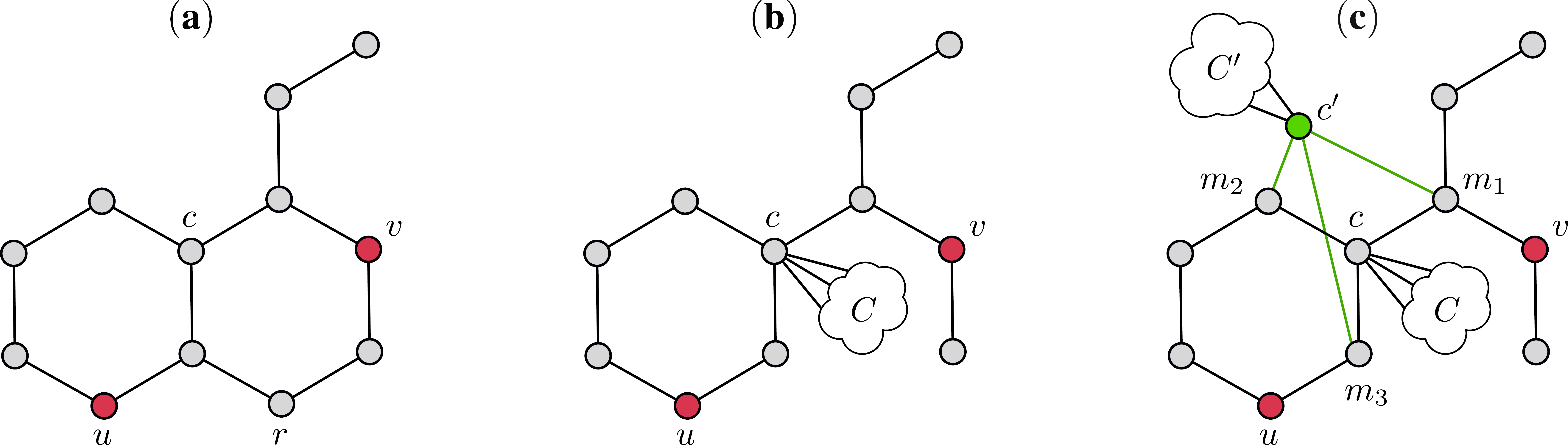}
\caption{
\textbf{(a)} A molecular graph, taken from Ref. \cite{Herndon1975T3199IsospectralGraphsMolecules}, which has two cospectral vertices $u,v$ (red) and two ``unrestricted substitution points'' (USPs) $c,r$.
\textbf{(b)} The USPs are vertices which can be connected to any arbitrary graph $C$ (as done with $c$) or also removed from the graph (as done with $r$), without breaking the cospectrality of $u,v$.
\textbf{(c)} In the present work we generalize USPs to vertex subsets called ``walk multiplets'', an example here being the subset $\mul = \{m_1,m_2,m_3\}$.
We can connect this subset to a new vertex $c'$, which we can in turn connect to an arbitrary graph $C'$, without breaking the cospectrality of $u,v$.
The added vertex $c'$ is a walk ``singlet'', which is identified as an USP. 
}
\label{fig:exampleUSP}
\end{figure*}

While already offering a great flexibility, USPs do not necessarily occur in all graphs with cospectral pairs. 
This leads to the question:
Are there other possibilities of graph extensions, involving a \emph{set of points} instead of just a single point to which one can connect an arbitrary graph? 
Imagine, for example, a subset $\mul$ of some graph's vertex set to which some arbitrary new graph $C'$ can be connected, by connecting an arbitrary single vertex $c'$ of $C'$ to all vertices in $\mul$, without breaking the cospectrality between two vertices $u,v$ of the original graph.
Such a subset $\mul$, associated in this way with a cospectral vertex pair, corresponds to what we will call a ``walk multiplet'' relative to $u,v$.
An example is illustrated in \cref{fig:exampleUSP}\,(c).
The key property, in analogy to USPs, is that the cumulative number of walks from $u$ to all vertices in $\mul$ is the same as from $v$ to $\mul$. 
An USP is then just the simplest case of a walk multiplet consisting of a single vertex, a walk ``singlet''.

Below, we will formalize the concept of walk multiplets and describe the various flavors they can assume in general undirected and real-weighted graphs, which correspond to real symmetric matrices.
Their value in extending graphs with cospectral vertices will be shown subsequently in \cref{sec:multipletExtension}, and their significance for graph eigenvectors will be demonstrated in \cref{sec:multipletsAndEigenvectors}.
First, we introduce some helpful key notions in the description of walks.

\subsection{Weighted walks and walk matrices}
\label{sec:walkmatrix}

Let us first extend the correspondence between walks on a graph, defined in \cref{eq:walk}, and powers of its matrix $H$ to a weighted graph, where the entries of $\ham$ are arbitrary real numbers.
Any walk $\alpha_{k}$ from $v_1$ to $v_{k+1}$ is then given a weight $w(\alpha_{k})$ equal to the product of the edge weights $w(v_{i},v_{i+1}) = H_{v_i,v_{i+1}}$ of all edges traversed \cite{Brualdi2008_CombinatorialApproachMatrixTheory}, that is,
\begin{equation}
    w(\alpha_{k}(v_{1},v_{k+1})) = w(v_{1},v_{2}) w(v_{2},v_{3})\cdots{} w(v_{k},v_{k+1}) 
    = \prod_{i=1}^k \comp{H}{v_i}{v_{i+1}}.
\end{equation}
The entries $\comp{\ham^{k}}{m}{n}$ are then given by the sum over weighted walks as \cite{Brualdi2008_CombinatorialApproachMatrixTheory}
\begin{equation} \label{eq:hamilPowers}
    \comp{\ham^{k}}{m}{n} = \sum_{\alpha_k} w\left(\alpha_k(m,n)  \right)
\end{equation}
where the sum runs over all distinct walks of length $k$ from $m$ to $n$.

Consider, now, a subset $\mul \subseteq \vertSet$ of the set $\vertSet$ of the vertices of a graph $\ham$.
The \emph{walk matrix} of $\ham$ relative to $\mul$ is the matrix \cite{Godsil2012_AC_16_733_ControllableSubsetsGraphs} $W_\mul = [e_\mul, H e_\mul, \dots, H^{N-1} e_\mul]$,
whose $k$-th column equals the action of $\ham^{k-1}$ on the so called \emph{indicator} (or \emph{characteristic}) vector $e_\mul$ of $\mul$ with $\vcomp{e_\mul}{m} = 1$ 
for $m \in \mul$ and $0$ otherwise.
Thus, the element 
\begin{equation} \label{eq:walkmatrixElementUnweighted}
 \comp{W_\mul}{s}{\ell} = \sum_{m \in \mul} \comp{H^{\ell-1}}{s}{m}
\end{equation}
equals the sum over weighted walks [in the sense of \cref{eq:hamilPowers}] of length $\ell-1 \in [\![ 0, N-1 ]\!]$ from vertex $s$ to all vertices of $\mul$.

Below we will use this notion of collective walks to vertex subsets to identify structural properties of graphs and their eigenvectors.
It will then be convenient, however, to account also for the case where the walks to different vertices $m \in \mul$, represented by $\comp{H^{k}}{s}{m}$, are multiplied by some (generally different) factors $\gamma_m$.
Treating $W_\mul$ as the Krylov matrix \cite{Meyer2000_MatrixAnalysisAppliedLinear} of $\ham$ generated by $e_\mul$, we thus simply replace this generating vector with a \emph{weighted indicator vector} $e_\mul^\gamma$ having a tuple $\gamma = (\gamma_m)_{m \in \mul}$ of general real values $\gamma_m$ instead of $1$'s in its nonzero entries $m \in \mul$.
This extends the common walk matrix to a corresponding ``weighted'' version which we denote as $W_\mul^\gamma$, that is 
\begin{align} \label{eq:walkMatrix}
W_\mul^\gamma = [e_\mul^\gamma, H e_\mul^\gamma, \dots, H^{N-1} e_\mul^\gamma],
\quad\quad 
\gamma = (\gamma_m)_{m \in \mul},
\quad\quad
 [e_\mul^\gamma]_m = 
\begin{cases} 
    \gamma_m, & m \in \mul \\
    0, & m \notin \mul
\end{cases}.
\end{align}
For this weighted walk matrix, \cref{eq:walkmatrixElementUnweighted} is accordingly modified to the more general form
\begin{equation} \label{eq:walkmatrixElement}
 \comp{W_\mul^\gamma}{s}{\ell} = \sum_{m \in \mul} \gamma_m \comp{H^{\ell-1}}{s}{m}, \quad \ell \in [\![ 1,N ]\!],
\end{equation}
so that the interpretation of matrix powers in terms of walks is further equipped with weights $\gamma_m$ for the individual walk destinations $m$.

\subsection{Walk equivalence of cospectral vertices}

Combining the intuition of equal number of walks to vertex subsets in \cref{sec:USPs} with the notion of weighted walk matrices in \cref{sec:walkmatrix}, it now comes natural to define the general case of a walk multiplet.
We will then discuss examples of walk multiplets before analyzing their consequences in the next sections.

\begin{definition}[Walk multiplet] \label{def:UniformMultiplets}
Let $\ham \in \mathbb{R}^{N \times N}$ be a matrix with vertex set $\vertSet$ and walk matrix $W_\mathbb{M}^\gamma$ relative to a subset $\mul \subseteq \vertSet$ with weighted indicator vector $e_\mul^\gamma$ corresponding to the tuple $\gamma = (\gamma_m)_{m \in \mul}$.
If the $u$-th and $v$-th rows of $W_\mathbb{M}^\gamma$ fulfill
\begin{equation} \label{eq:multipletWalkMatrix}
 \comp{W_\mathbb{M}^\gamma}{u}{*} = p
 \comp{W_\mathbb{M}^\gamma}{v}{*}
\end{equation}
(with $*$ denoting the range $[\![ 1,N ]\!]$, i.\,e. all matrix columns), then $\mul$ corresponds to an \textbf{even (odd) walk multiplet} with \textbf{parity} $p= +1$ $(-1)$ relative to the two vertices $u,v$, denoted as $\mul_{\gamma;u,v}^p$, and $u,v$ are \textbf{walk equivalent (antiequivalent)} with respect to $\mul_{\gamma;u,v}^p$.
\end{definition}

\noindent
A walk multiplet $\mul_{\gamma;u,v}^p$ is thus not merely a subset $\mul$, but this subset equipped with a $|\mul|$-tuple of weight parameters $\gamma$ and a parity $p$, associated with a given vertex pair $u,v$.
If all weights $\gamma_m$ are equal, then $\mul_{\gamma;u,v}^p$ is a \emph{uniform} walk multiplet, and we will first discuss such multiplets.
In this case the common weight is obviously a global scaling factor in \cref{eq:multipletWalkMatrix} and can be set to unity without loss of generality, $\gamma_m = 1$ for all $ \, m \in \mul$.  
We will show cases of \emph{nonuniform} walk multiplets (with unequal $\gamma_m$ in general) afterwards.
Although walk multiplets are generally defined above relative to any pair of vertices $u,v$, we will concentrate on multiplets relative to cospectral vertices $u,v$ from now on.
Also, for brevity, we will drop the indication of vertices $u,v$ in the subscript of $\mul_{\gamma;u,v}^p$ when they are clear from the context.
According to their cardinality (the number $|
\mul|$ of vertices in $\mul$) we call multiplets ``singlets'', ``doublets'', etc.
Note that the same subset $\mul$ can in general correspond simultaneously to different walk multiplets relative to different cospectral vertex pairs or with different tuples $\gamma$.
We should also point out that the notion of ``walk equivalence'' of two \emph{graphs} as a whole has been used  \cite{Douglas2008_JPAMT_41_075303_ClassicalApproachGraphIsomorphism,Liu2019_AM_UnlockingWalkMatrixGraph}, and stress that we here introduce the notion of walk equivalence of two \emph{vertices} with respect to a vertex subset.

Before showing examples of walk multiplets, we note that the condition (\ref{eq:multipletWalkMatrix}) only incorporates walks of length $k \in [\![0, N-1 ]\!]$ from $u$ and from $v$ to $\mul$; 
see \cref{eq:walkmatrixElement}.
At first sight one might then wonder whether the sum over longer walks ($k \geqslant N$) to $\mul$ is also equal for $u$ and $v$.
This is indeed the case.
Due to the Cayley-Hamilton theorem, we have that $H^N = \sum_{k=0}^{N-1} c_k H^k$ with constant coefficients $c_k$, meaning that higher powers $k > N-1$ of $H$ can be written as polynomials in $H$ of order up to $N-1$. 
Thus, if \cref{eq:multipletWalkMatrix} holds, we have that
\begin{equation} \label{eq:multipletDef}
    \sum_{m \in \mul} \gamma_m \comp{\ham^{k}}{u}{m} = p 
    \sum_{m \in \mul} \gamma_m \comp{\ham^{k}}{v}{m} \quad \forall \, k  \in \mathbb{N}.
\end{equation}
For an unweighted graph, the notion of walk equivalence of $u$ and $v$ with respect to $\mul$ then acquires a simple interpretation: An even uniform walk multiplet ($\gamma_m = 1$ for all $m \in \mul$) corresponds to a vertex subset $\mul$ such that the number of walks from $u$ to $\mul$ equals the number of walks from $v$ to $\mul$ (that is, summed over all $m \in \mul$) for any walk length $k$.
Let us now have a look at some uniform walk multiplets in an example graph.

\begin{figure*} [t] 
\centering
\includegraphics[max size={0.9\textwidth}]{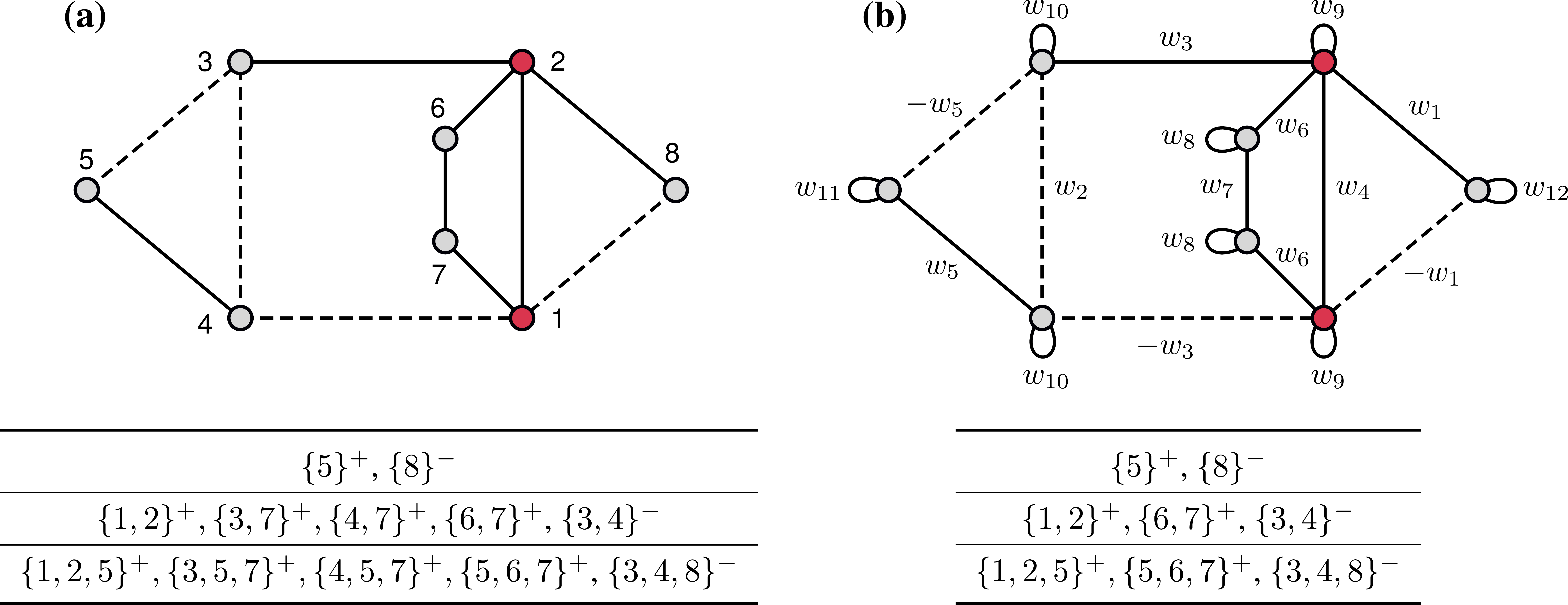}
\caption{
\textbf{(a)}
A graph with edge weights $+1$ (solid lines) and $-1$ (dashed lines) in which the two red vertices $1$ and $2$ are cospectral (among other cospectral pairs) and
\textbf{(b)} the same graph with edges weighted by $12$ real parameters $w_n$ as shown, preserving the cospectrality of $\{1,2\}$.
The tables below list all \emph{uniform} walk singlets, doublets, and triplets (top to bottom) relative to $\{1,2\}$, with superscripts indicating the parity $p$ of each multiplet;
see \cref{ex:exampleUniformMultiplets}.
}
\label{fig:exampleUniformMultiplets}
\end{figure*}

\begin{example} \label{ex:exampleUniformMultiplets}
\normalfont
In the graph depicted in \cref{fig:exampleUniformMultiplets}\,(a), the two red vertices $u = 1, v = 2$ are cospectral. 
All uniform walk singlets, doublets, and triplets of $H$ with respect to $1,2$ are given in the table below.
We put a superscript $+$\,($-$) on each individual multiplet subset to indicate its even (odd) parity $p$. 
Importantly, the vertex cospectrality and multiplet structure of a graph are in general not strictly bound to a specific set of edge weight values.
Indeed, one may generally ``parametrize'' the edge weights, by setting groups of them to the same but arbitrary real value, and still retain the graph's vertex cospectrality as well as a subset of its walk multiplets.
To demonstrate such a parametrization, in \cref{fig:exampleUniformMultiplets}\,(b) the graph of \cref{fig:exampleUniformMultiplets}\,(a) has been weighted by arbitrary real parameters $w_n$ ($n = 1,2,\dots,12$) as shown.
The uniform multiplets shown in the table below the graph are present for any choice of the weight parameters $w_n$, as does the cospectrality of $1,2$.
Note, however, that certain uniform multiplets of the original graph are removed in the parametrized one for arbitrary values $w_n$ (that is, if there are no further constraints on these values); 
for example, $\{3,7\}^+$ and $\{4,7\}^+$.
Other cospectrality-preserving edge weight parametrizations (not shown) may keep different sets of multiplets intact.
We note here that the graphs in \cref{fig:exampleUniformMultiplets} were chosen to have a simple geometry to highlight the occurrence of even and odd walk multiplets.
Indeed, in this particular case the graph's matrix $H$ (for both subfigures of \cref{fig:exampleUniformMultiplets}) commutes with the \emph{signed} permutation 
\begin{equation}
\varPi =
 \begin{bmatrix}
  0 & 1 \\
  1 & 0
 \end{bmatrix}
 \oplus
 \begin{bmatrix}
  0 & -1 \\
  -1 & 0
 \end{bmatrix}
 \oplus 1
 \oplus
  \begin{bmatrix}
  0 & 1 \\
  1 & 0
 \end{bmatrix}
 \oplus -1
\end{equation}
with $\varPi^2 = I$.
This symmetry induces some of the present walk multiplets, e.\,g. the anti-doublet $\{3.4\}^-$ relative to $\{1,2\}$, since 
$
\comp{\ham^k}{1}{3} + \comp{\ham^k}{1}{4} = 
\comp{\varPi^2\ham^k}{1}{3} + \comp{\varPi^2\ham^k}{1}{4} = 
\comp{\varPi \ham^k \varPi}{1}{3} + \comp{\varPi \ham^k \varPi}{1}{4} = 
-\comp{\ham^k}{2}{4} - \comp{\ham^k}{2}{3}
$.
In fact, all walk multiplets which are retained after the parametrization in \cref{fig:exampleUniformMultiplets}\,(b) can be seen as a consequence of this symmetry.
The remaining ones, that is, $\{3,7\}^+$, $\{4,7\}^+$, $\{3,5,7\}^+$, $\{4,5,7\}^+$ in \cref{fig:exampleUniformMultiplets}\,(a), cannot be explained by this simple symmetry but rather by a symmetry of the graph's walk structure---specifically, under row permutation on the graph's walk matrix, see \cref{eq:multipletWalkMatrix}.
\end{example}

\noindent
Surely, the graph in \cref{ex:exampleUniformMultiplets} also features a whole lot of nonuniform multiplets, but we do not show them for simplicity.
We have another example dedicated to nonuniform multiplets right below.
Apart from that, though, the reader might have noticed that the cospectral pair $\{1,2\}$ in \cref{fig:exampleUniformMultiplets} itself is included in the list of uniform walk multiplets.
This is not a coincidence for this particular graph.

\begin{remark} \label{rem:cospPairDoublet}
\normalfont
A cospectral vertex pair $\{u,v\}$ is a uniform even walk doublet relative to itself, since 
$\comp{\ham^{k}}{u}{u} + \comp{\ham^{k}}{u}{v} =
\comp{\ham^{k}}{v}{v} + \comp{\ham^{k}}{v}{u}$,
with $\comp{\ham^{k}}{u}{u} = \comp{\ham^{k}}{v}{v}$ by \cref{eq:cospectralDefinition} and $\comp{\ham^{k}}{u}{v} = \comp{\ham^{k}}{v}{u}$ by the symmetry of $\ham = \ham^\top$.
Thus \cref{eq:multipletDef} is fulfilled with $\mul = \{u,v\}$ and $p = +1$.
\end{remark}

In the next example, we will illustrate the occurrence of nonuniform walk multiplets, where the walks to different destinations $m$ in the associated subset $\mul$ are \emph{generally weighted differently} by weights $\gamma_m$.
Usually, however, those weights $\gamma_m$ are not \emph{all} different from each other, but can be partitioned into groups of equal values.
We call the vertex subset of a multiplet with such equal values in the tuple $\gamma = (\gamma_m)_{m \in \mul}$ a \emph{sublet} of the multiplet.
In other words, given a nonuniform walk multiplet $\mul_\gamma^p$, the subset $\mul$ is the union of $N_s = N_s(\gamma) \leqslant |\mul|$ disjoint sublets $\sublet{\mu}$, that is,
\begin{equation} \label{eq:sublets}
 \mul = \bigcup_{\mu = 1}^{N_s} \sublet{\mu}, \quad 
 \sublet{\mu} \cap \sublet{\nu} = \emptyset \quad 
 \forall \, \mu \neq \nu, \quad 
 \gamma_{m \in \sublet{\mu}} = \varGamma_\mu
\end{equation}
such that all weights $\gamma_m$ with $m \in \sublet{\mu}$ have equal value $\varGamma_\mu$ which we call the ``coefficient'' of the sublet $\sublet{\mu}$.
The expanded form of the multiplet condition, \cref{eq:multipletDef}, then becomes
\begin{equation} \label{eq:multipletDefSublets}
    \sum_\mu \varGamma_\mu \sum_{m \in \sublet{\mu}} \comp{\ham^{k}}{u}{m} = p 
    \sum_\mu \varGamma_\mu \sum_{m \in \sublet{\mu}} \comp{\ham^{k}}{v}{m} \quad 
    \forall k \in \mathbb{N}.
\end{equation}
We indicate the coefficients $\varGamma_\mu$ as subscripts of vertex sublets within a walk multiplet, as shown in the following example.

\begin{figure*} [t] 
\centering
\includegraphics[max size={\textwidth}]{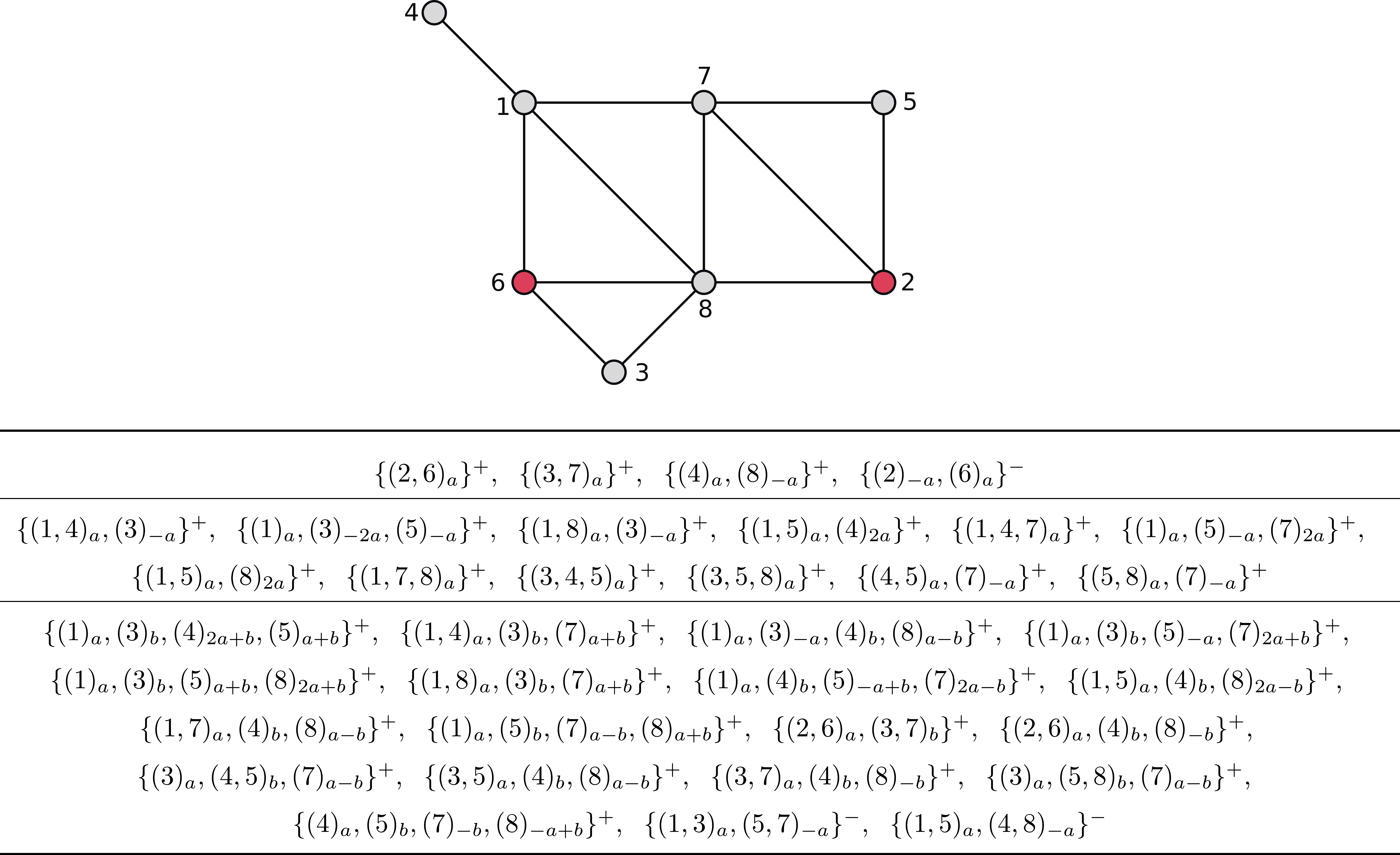}
\caption{
An unweighted graph with two cospectral vertices $2,6$, with all existing walk multiplets up to maximal size of four vertices listed in the table below the graph (there are no walk singlets).
Each walk multiplet $\{ \cdots \}^\pm$ is composed of sublets $(\cdots)_{\Gamma_\mu}$ with coefficients $\Gamma_\mu \in \mathbb{R}$ which are \emph{independent} among multiplets (although the same symbols $a,b$ are used for brevity); see \cref{exampleUSP}.
}
\label{fig:exampleNonuniformMultiplets}
\end{figure*}

\begin{example} \label{exampleUSP}
\normalfont
The graph in \cref{fig:exampleNonuniformMultiplets} has two cospectral vertices $u=2$ and $v=6$, with walk multiplets shown in the table for a maximum number of $4$ vertices (there are no singlets).
As an example of notation, the even nonuniform walk triplet $\mul_\gamma^p = \{(1,5)_a,(4)_{2a}\}^+$ (with $\gamma = (\gamma_1,\gamma_4,\gamma_5) = (a,2a,a)$) is composed of the sublets $\sublet{1} = \{1,5\}$ and $\sublet{2} = \{4\}$ with coefficients $\varGamma_1 = a$ and $\varGamma_2 = 2a$, respectively, where the parameter $a$ can take any nonzero value.
Note that the values of sublet coefficients (like $a,b$ in \cref{fig:exampleNonuniformMultiplets}) in \emph{different} multiplets are \emph{unrelated}.
For instance, $\{(4)_{a}, (8)_{-a}\}^+$ is an even doublet composed of sublets $\{4\}$ and $\{8\}$ with coefficients $a$ and $-a$, independently of the values of $a$ in the other multiplets in \cref{fig:exampleNonuniformMultiplets}.
Similarly, $\{(1)_{a}, (3)_{b}, (4)_{2 a + b}, (5)_{a + b}\}^{+}$ is an even quadruplet composed of the four sublets $\{1\}$, $\{3\}$, $\{4\}$, $\{5\}$ with corresponding nonzero coefficients $a,b,2a+b,a+b$.
If, however, any $n>0$ of these coefficients vanish, then the remaining $4-n$ sublets with nonzero coefficients constitute a multiplet with $4-n$ vertices.
For example, if $b = -a$, the coefficient of $\{5\}$ vanishes, and the remaining three sublets form the triplet $\{(1,4)_{a}, (3)_{-a}\}^{+}$.
Finally, note that any uniform multiplet consists of a single sublet, like, e.\,g., $\{(1,4,7)_a\}^+$.
As one can see, the number of nonuniform multiplets is much larger than the number of uniform ones in the present example.
\end{example}

Now, going back to \cref{fig:exampleUniformMultiplets}, a closer look at the table there suggests that the union of multiplets of same parity $p$ form a multiplet; 
for instance, $\{8\}^- \cup \{3,4\}^- = \{3,4,8\}^-$.
Indeed, the union of disjoint uniform multiplets of equal parity always forms a new uniform multiplet.
In fact, different uniform multiplets may also overlap (that is, have common vertices), and their union is again a multiplet, though a \emph{nonuniform} one.
Take, e.\,g., the three uniform even multiplets $\{(3,7)_{{a}}\}^+$, $\{(4,7)_{{b}}\}^+$, $\{(5)_{{a'}}\}^+$, now written with arbitrary uniform weights ${a}$, ${b}$, ${a'}$, respectively.
Their union forms the nonuniform even multiplet $\{(3)_{{a}},(4)_{{b}},(5)_{{a'}},(7)_{{a} + {b}}\}^+$ consisting of four sublets with coefficients $\varGamma_{1,2,3,4} = {a}, {b},{a'},{a}+{b}$.
Quite generally, any two walk multiplets of equal parity can be merged into a larger multiplet, as expressed by the following remark.

\begin{remark} \label{rem:remarkUnion}
\normalfont
It is clear from \cref{eq:multipletWalkMatrix} that, if $\mathbb{A}_\gamma^p$ and $\mathbb{B}_\delta^p$ are two even (odd) walk multiplets with weighted indicator vectors $e_{\mathbb{A}}^{\gamma}$ and $e_{\mathbb{B}}^{\delta}$, respectively, then $\mathbb{C}_\epsilon^p$ with $\mathbb{C}= \mathbb{A} \cup \mathbb{B}$ is also an even (odd) multiplet with weighted indicator vector $e_{\mathbb{C}}^{\epsilon} = e_{\mathbb{A}}^{\gamma} + e_{\mathbb{B}}^{\delta}$.
\end{remark}

\noindent
Note, however, that not all nonuniform multiplets can be decomposed as a union of uniform multiplets.
This is easily verified from the table of \cref{fig:exampleNonuniformMultiplets}.
For example, the even nonuniform walk quadruplet $\{(2,6)_a,(3,7)_b\}^+$ is the union of the two even uniform doublets $\{(2,6)_a\}^+$ and $\{(3,7)_b\}^+$, but none of the walk triplets can be decomposed into smaller multiplets (that is, a doublet and a singlet or two overlapping doublets). 
On the other hand, the nonuniform quadruplet $\{(1)_a, (3)_{-a}, (4)_b, (8)_{a-b}\}^+$ is composed of the nonuniform triplet $\{(1,4)_a, (3)_{-a}\}$ and doublet $\{(4)_{a'}, (8)_{-a'}\}$ with $a' \equiv b - a$.

\section{Preserving vertex cospectrality via walk multiplets}
\label{sec:multipletExtension}

Walk multiplets are very valuable for the analysis and understanding of matrices with cospectral vertices $u$ and $v$. 
As we will show, once (one or more of) the walk multiplets of $\ham$ relative to $u$ and $v$ are known, one can use this knowledge to \emph{extend} a graph $\ham$ by connecting a new vertex (or even arbitrary graphs) to it whilst preserving the cospectrality of $u$ and $v$.
This naturally generalizes the notion of USPs to subsets of more than one vertex of a graph.
We will also show how to \emph{interconnect} walk multiplets, thereby changing the topology of a given graph, while preserving the associated vertex cospectrality.

In the literature \cite{Godsil2012_AC_16_733_ControllableSubsetsGraphs}, connecting a single vertex to a graph $H$ via multiple edges of weight $1$ results in a graph $H'$ which is coined a ``cone'' of $H$. 
To treat general weighted graphs, we will here require cones with weighted edges:

\begin{definition}[Weighted cone] \label{def:weightedCone}
 Let $\hamz \in \mathbb{R}^{N \times N}$ represent a graph with vertex set $\vertSet = \{1,2,\dots,N\}$.
 A \textbf{weighted cone} of $\hamz$ over a subset $\mul \subseteq \vertSet$ with weight tuple $\gamma = (\gamma_m)_{m \in \mul}$ is the graph
 \begin{equation}
  H = 
  \begin{bmatrix}
   \hamz   &  e_\mul^\gamma \\
   e_\mul^{\gamma\top} & 0
  \end{bmatrix},
 \end{equation}
 constructed by connecting a new vertex $c = N+1$ (the \textbf{tip} of the cone) to $\mul$ with edges of weights $\gamma_m = \ham_{m,c} = \ham_{c,m}$ to the corresponding vertices $m \in \mul$, where $e_\mul^{\gamma}$ is the weighted indicator vector of \cref{eq:walkMatrix} with nonzero entries $\gamma_m$.
\end{definition}

\noindent 
For instance, the graph in \cref{fig:exampleUniformMultiplets}\,(a) is the weighted cone $H$ over the vertex subset $\{1,2\}$ of the graph $H \setminus 8$ ($H$ after removing vertex $8$) with weight tuple $\gamma = (\gamma_1, \gamma_2) = (-1,1)$.
We can now state one of the main results of this work, which will allow for the systematic extension of graphs with cospectral pairs while keeping the cospectrality:

\begin{restatable}[Walk singlet extension]{thm}{multipletExtension} 
\label{thm:multipletExtension}
Let $\hamz = \hamz^\top \in \mathbb{R}^{N \times N}$ represent an undirected graph with two cospectral vertices $u,v$, let $\mul_\gamma^p$ be an even (odd) walk multiplet of $\hamz$ relative to $u,v$, and let $\ham$ be a weighted cone of $\hamz$ over the subset $\mul$ with real weight tuple $\gamma = (\gamma_m)_{m \in \mul}$.
Then
\begin{itemize}
    \item[(i)] Vertices $u,v$ are cospectral in $\ham$.
    \item[(ii)] The tip $c$ of the cone $\ham$ is an even (odd) walk singlet relative to $u,v$.
    \item[(iii)] Any even (odd) walk multiplet in $\hamz$ is an even (odd) walk multiplet in $\ham$.
\end{itemize}
\end{restatable}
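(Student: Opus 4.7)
My plan is to work with the formal resolvent $R(z) := (I - z\ham)^{-1}$, treated as a matrix of formal power series $\sum_{k\geqslant 0} z^k \ham^k$ in $\mb{R}[[z]]$. Since parts (i)–(iii) all assert equalities between entries of $\ham^k$ for every $k \geqslant 0$, they are equivalent to the corresponding equalities between entries of $R(z)$. The two hypotheses translate into compact resolvent identities that will carry the whole argument: cospectrality of $u,v$ in $\hamz$ reads $[R_\hamz(z)]_{u,u} = [R_\hamz(z)]_{v,v}$ with $R_\hamz(z) := (I - z\hamz)^{-1}$, and the walk-multiplet condition---already extended from $[\![0, N-1]\!]$ to all $k \in \mb{N}$ via Cayley--Hamilton in \cref{eq:multipletDef}---gives the vector identity $[R_\hamz(z)\,e]_u = p\,[R_\hamz(z)\,e]_v$, where I abbreviate $e := e_\mul^\gamma$.

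Applying Schur-complement inversion to the block form of $I - z\ham$ and using $\hamz^\top = \hamz$, the top-left $N \times N$ block of $R(z)$ comes out as $R_\hamz(z) + z^2 s(z)\,R_\hamz(z)\,e\,e^\top R_\hamz(z)$, the top-right column as $z\,s(z)\,R_\hamz(z)\,e$, and the bottom-right scalar as $s(z) := (1 - z^2 e^\top R_\hamz(z)\,e)^{-1}$. For \emph{(i)} the $(u,u)$ and $(v,v)$ entries of $R(z)$ differ only by the sum of $[R_\hamz(z)]_{u,u} - [R_\hamz(z)]_{v,v}$ (which vanishes by cospectrality in $\hamz$) and $z^2 s(z)\bigl(([R_\hamz(z)e]_u)^2 - ([R_\hamz(z)e]_v)^2\bigr)$, which vanishes because squaring absorbs the sign $p = \pm 1$. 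For \emph{(ii)} the top-right column directly yields $[R(z)]_{u,c} = z s(z)[R_\hamz(z) e]_u = p\cdot z s(z)[R_\hamz(z) e]_v = p\,[R(z)]_{v,c}$, which is exactly the walk-singlet condition for $\{c\}$ with parity $p$.

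For \emph{(iii)}, take any walk multiplet $\mathbb{N}_\delta^q$ of $\hamz$ relative to $u,v$ whose parity $q$ matches the parity $p$ of $\mul_\gamma^p$ (as I read the intended bracketing of ``(even (odd))'' in the statement). Summing the top-left block entries $[R(z)]_{u,n}$ over $n \in \mathbb{N}$ weighted by $\delta_n$ produces $\sum_n \delta_n [R_\hamz(z)]_{u,n} + z^2 s(z)\,[R_\hamz(z) e]_u\, \sum_n \delta_n [R_\hamz(z) e]_n$; the first term equals $q$ times its $v$-counterpart by the $\mathbb{N}$-multiplet condition in $\hamz$, and the second picks up a factor $p = q$ from the $\mul$-identity, so both pieces factor $q$ cleanly and $\mathbb{N}_\delta^q$ inherits the multiplet property in $\ham$ with unchanged parity. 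The principal subtlety I anticipate is precisely this parity matching: a genuine mismatch $p \neq q$ would leave a residue proportional to $(p - q)\,[R_\hamz(z)e]_v \sum_n\delta_n [R_\hamz(z)e]_n$ that is generically nonzero, so the hypothesis $q = p$ is essential rather than cosmetic. An equivalent combinatorial derivation proceeds by inductive walk decomposition through $[\ham^k]_{u,n} = [\hamz^k]_{u,n} + \sum_{j=0}^{k-1}\bigl(\sum_{m\in\mul}\gamma_m [\hamz^j]_{u,m}\bigr)[\ham^{k-j-1}]_{c,n}$, whose parenthesized sum is $p$ times its $v$-counterpart by hypothesis; I would retain this picture only as intuition and pursue the resolvent route in the actual write-up, since it bypasses any case analysis on whether $u$, $v$, or $n$ lies in $\mul$ and settles all three parts with a single block inversion.
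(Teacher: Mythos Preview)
Your proposal is correct and takes a genuinely different route from the paper. The paper argues combinatorially: for each of (i)--(iii) it decomposes a walk in $\ham$ by its first (and, for closed walks, also last) visit to the new vertex $c$, yielding outer segments entirely in $\hamz$ and an inner segment in $\ham$ anchored at $c$; the multiplet condition \cref{eq:multipletConditionOld} is then applied to each $\hamz$-segment separately. Your approach compresses all of this into a single Schur-complement inversion of $I - z\ham$ over $\mb{R}[[z]]$: the rank-one correction $z^2 s(z)\,R_\hamz(z)\,e\,e^\top R_\hamz(z)$ to the top-left block and the column $z\,s(z)\,R_\hamz(z)\,e$ encode exactly the walk decompositions the paper writes out by hand, and the parity factor $p$ enters once through $[R_\hamz(z)e]_u = p\,[R_\hamz(z)e]_v$. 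This is cleaner (three parts from one identity, no repeated bookkeeping) and makes the necessity of the parity match in (iii) immediately visible as the mismatch term $(p-q)[R_\hamz(z)e]_v\sum_n\delta_n[R_\hamz(z)e]_n$. What the paper's approach buys in exchange is transparency at the level of individual walks, which feeds directly into the later proofs of \cref{thm:multipletInterconnection,thm:uspSetsAreMultiplets}. Two small notes: rename your generic multiplet (the symbol $\mb{N}$ clashes with the naturals already used in the paper), and observe that the paper's written proof of (iii) in fact verifies only the case of $\mul_\gamma^p$ itself---the extension to an arbitrary same-parity multiplet is immediate from their displayed formula for $\comp{\ham^k}{u}{m}$, but you handle the general case directly, which is slightly more complete.
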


\noindent
Point (i) of the theorem extends the notion of USPs to vertex subsets for the case of a single new connected vertex $c$: the vertex $c$ is now connected to a subset $\mul$ instead of a single USP of a graph without breaking the associated vertex cospectrality.
Further, by point (ii) of the theorem, another new vertex $c'$ can be connected to $c$ while preserving cospectrality, just as would be the case if $c$ were a USP.
Point (iii) finally allows \emph{multiple} single new vertices to be connected to different walk multiplets, or to the same walk multiplet.
In the case of a USP, however, cospectrality is preserved when connecting a new \emph{arbitrary graph} to the USP, and not only a single new vertex.
This is indeed also the case for a walk singlet.

\begin{corollary} \label{cor:singletGraphExtension}
 Let the vertex $c$ of a graph $\ham$ be an even (odd) walk singlet relative to a cospectral pair $u,v$ in $\ham$, and let $C$ be a graph connected exclusively to $c$ via any number of edges with arbitrary weights.
 Then all vertices of $C$ are even (odd) walk singlets relative to $u,v$.
\end{corollary}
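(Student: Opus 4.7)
The plan is to attach the vertices of $C$ one at a time and, at each step, to invoke \cref{thm:multipletExtension} with the help of \cref{rem:remarkUnion} to certify that the newly added vertex is again a walk singlet of parity $p$. Enumerate the vertices of $C$ as $c_{1},c_{2},\ldots,c_{m}$ in an arbitrary order, set $\hamz_{0} = \ham$, and let $\hamz_{i}$ be the graph obtained from $\hamz_{i-1}$ by adjoining the vertex $c_{i}$ together with all of its edges to vertices already present in $\hamz_{i-1}$. Since $C$ attaches to $\ham$ only through $c$, every edge incident to $c_{i}$ goes to $c$ or to some $c_{j}$ with $j<i$, with internal edges of $C$ accounted for at their later endpoint. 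Thus $\hamz_{i}$ is exactly the weighted cone of $\hamz_{i-1}$ over the subset $S_{i} \subseteq \{c,c_{1},\ldots,c_{i-1}\}$ of neighbors of $c_{i}$, with the edge-weight tuple $(w_{m})_{m \in S_{i}}$, in the sense of \cref{def:weightedCone}.

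The inductive hypothesis to carry forward is: in $\hamz_{i}$, the vertices $u,v$ are cospectral and every vertex in $\{c,c_{1},\ldots,c_{i}\}$ is a walk singlet of parity $p$ relative to $u,v$. The base case $i=0$ is exactly the hypothesis of the corollary. For the inductive step, note that any single vertex $\{c_{j}\}$ equipped with any nonzero weight $\gamma$ remains a walk singlet of parity $p$, because the defining condition \cref{eq:multipletDef} is homogeneous in $\gamma$. Iterating \cref{rem:remarkUnion} over the disjoint union $S_{i} = \bigcup_{m \in S_{i}}\{m\}$, with coefficient tuple $(w_{m})_{m \in S_{i}}$, therefore yields a walk multiplet on $S_{i}$ in $\hamz_{i-1}$ with exactly the weights prescribed by the edges of $c_{i}$. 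Now \cref{thm:multipletExtension} applies to the cone $\hamz_{i}$ of $\hamz_{i-1}$: part (i) keeps $u,v$ cospectral in $\hamz_{i}$, part (ii) makes the new tip $c_{i}$ a walk singlet of parity $p$, and part (iii) preserves the previously established singlets $\{c\},\{c_{1}\},\ldots,\{c_{i-1}\}$ as walk singlets in $\hamz_{i}$. After $m$ such steps we reach $\hamz_{m} = \ham \cup C$ with every vertex of $C$ a walk singlet of parity $p$ relative to $u,v$, which is the claim.

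The only subtle point is that \cref{thm:multipletExtension} requires the weight tuple used to construct the cone to be \emph{the same} tuple that makes the base set a walk multiplet. This forced matching is precisely what the weight-homogeneity of the singlet condition, combined with \cref{rem:remarkUnion}, provides: we may freely choose the coefficients on each singleton to equal the edge weights $w_{m}$ dictated by $C$. Beyond this, the argument is a direct reduction to \cref{thm:multipletExtension}, and the particular enumeration of the vertices of $C$ is immaterial.
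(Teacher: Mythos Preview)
Your proof is correct and takes a genuinely different route from the paper's. The paper dispatches \cref{cor:singletGraphExtension} in a single remark at the end of the proof of \cref{thm:multipletExtension}: having already established the singlet condition for the tip $c$ via the decomposition $[\ham^{k}]_{u,c}=\sum_{\ell+n=k-1}\bmulz{\ell}{u}{\mul}\,[\ham^{n}]_{c,c}$, it observes that replacing $[\ham^{n}]_{c,c}$ by $[\ham^{n}]_{c,c'}$ (with $\ham$ now denoting the full graph including $C$) immediately gives $[\ham^{k}]_{u,c'}=p\,[\ham^{k}]_{v,c'}$ for every $c'\in C$, since any walk from $u$ to $c'$ must first reach $c$ through the base graph and the remaining segment from $c$ into $C$ is independent of whether one started at $u$ or $v$. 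Your argument instead assembles $C$ vertex by vertex and invokes all three parts of \cref{thm:multipletExtension} at each step, using \cref{rem:remarkUnion} to fuse the accumulated singlets into a weight-matched multiplet before coning. The paper's direct decomposition handles all of $C$ in one stroke and is shorter; your inductive reduction is cleanly modular and makes the dependence on \cref{thm:multipletExtension} explicit, at the price of more bookkeeping. One small caveat: the cone of \cref{def:weightedCone} places a zero on the diagonal at the tip, so if some $c_{i}$ carries a loop in $C$ your sequence of cones does not reproduce it; this is easily patched (e.g.\ via \cref{thm:multipletInterconnection} with $\mathbb{X}=\mathbb{Y}=\{c_{i}\}$), but worth noting.
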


We thus see that any walk singlet is a USP, and below (\cref{cor:graphConnectedToSinglet}) we will also show that the reverse is true.
Now, suppose we have connected some walk singlets to corresponding new subgraphs $C, C', \dots$, which then also consist purely of singlets.
Those subgraphs may also be \emph{interconnected} among each other in an arbitrary manner, by iteratively interconnecting pairs of singlets, leaving the associated cospectrality intact.
In fact, vertex interconnections preserving cospectrality can be generalized to the suitable interconnection of \emph{arbitrary walk multiplets of equal parity}, as ensured by the following theorem.

\begin{restatable}[Walk multiplet interconnection]{thm}{multipletInterconnection} \label{thm:multipletInterconnection}
Let $\hamz \in \mathbb{R}^{N \times N}$ be a graph with a cospectral pair $\{u,v\}$ and $\mathbb{X}^p_{\gamma}$, $\mathbb{Y}^p_{\delta}$ be (in general non-uniform) walk multiplets relative to $\{u,v\}$ having same parity $p$ and weight tuples $\gamma$, $\delta$, respectively, with possible subset overlap $\mathbb{Z} = \mathbb{X} \cap \mathbb{Y} \neq \emptyset$.
Then the cospectrality of $\{u,v\}$ and any walk multiplet relative to $\{u,v\}$ with parity $p$ are preserved in the graph $\ham \in \mathbb{R}^{N \times N}$ with elements  
\begin{equation*}
	\ham_{x,y} = \ham_{y,x} = \begin{cases*}
	\hamz_{x,y} + \gamma_{x}\delta_{y} & if $x \notin \mb{Z}$ or $y \notin \mb{Z}$ \\
	\hamz_{x,y} + \gamma_{x}\delta_{y}+\gamma_{y}\delta_{x} & if $x,y \in \mathbb{Z}$ 
	\end{cases*} \quad\quad \forall \; x \in \mb{X}, y \in \mb{Y}
\end{equation*}
and $\ham_{i,j} = \hamz_{i,j}$ otherwise.
\end{restatable}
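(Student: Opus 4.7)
The plan is to rewrite the modification as a symmetric rank-two perturbation of $\hamz$ and then exploit a boundary-vanishing argument in the non-commutative expansion of $\ham^{k}$. Writing $x = e^{\gamma}_{\mb{X}}$ and $y = e^{\delta}_{\mb{Y}}$, I first verify entry-by-entry that $\ham = \hamz + xy^\top + yx^\top$: the sum contributes nothing off $\mb{X} \times \mb{Y} \cup \mb{Y} \times \mb{X}$, contributes exactly $\gamma_{x}\delta_{y}$ on the non-overlap part (since only one of the two outer products is nonzero there), and contributes $\gamma_{x}\delta_{y} + \gamma_{y}\delta_{x}$ on $\mb{Z}\times\mb{Z}$, matching both cases of the statement. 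In particular $\ham = \ham^\top$, so we may apply the results of \cref{sec:multiplets} to it.

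Next I encode the hypotheses as orthogonality relations. Let $w_{p} = e_{u} - p\, e_{v}$. The walk-multiplet property of $\mb{X}^{p}_{\gamma}$ and $\mb{Y}^{p}_{\delta}$, via \cref{eq:multipletDef}, translates to
\begin{equation*}
 w_{p}^\top \hamz^{k} x = 0, \qquad w_{p}^\top \hamz^{k} y = 0 \qquad \forall\, k \in \mb{N},
\end{equation*}
while cospectrality of $\{u,v\}$ in $\hamz$, using $\hamz = \hamz^\top$, is equivalent to $(e_{u}+e_{v})^\top \hamz^{k}(e_{u}-e_{v}) = 0$ for every $k$.

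The core combinatorial step is the non-commutative expansion
\begin{equation*}
 \ham^{k} = (\hamz + xy^\top + yx^\top)^{k} = \sum_{r=0}^{k}\;\sum_{\substack{a_{0},\ldots,a_{r}\geq 0\\ a_{0}+\cdots+a_{r}=k-r}}\;\sum_{S_{1},\ldots,S_{r}\in\{xy^\top,\,yx^\top\}} \hamz^{a_{0}} S_{1}\hamz^{a_{1}} S_{2} \cdots S_{r}\hamz^{a_{r}},
\end{equation*}
and the observation that, sandwiched between row $f^\top$ and column $g$, every summand with $r \geq 1$ factorizes into a product of scalars
\begin{equation*}
 \bigl(f^\top \hamz^{a_{0}} v_{1}\bigr)\,\cdot\,\prod_{i=1}^{r-1}\bigl(v_{i}^{\prime\top}\hamz^{a_{i}} v_{i+1}\bigr)\,\cdot\,\bigl(v_{r}^{\prime\top}\hamz^{a_{r}} g\bigr),
\end{equation*}
where each $v_{j}, v_{j}^{\prime} \in \{x,y\}$ is dictated by the choice of $S_{i}$'s; for $r = 0$ the summand is simply $f^\top \hamz^{k} g$.

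Both claims of the theorem now follow by choosing $f,g$ and inspecting the first or last scalar. For an arbitrary walk multiplet $\mb{M}^{p}_{\eta}$ of $\hamz$, take $f = w_{p}$ and $g = e^{\eta}_{\mb{M}}$: for $r \geq 1$ the leftmost factor $w_{p}^\top \hamz^{a_{0}} v_{1}$ vanishes by the hypotheses on $x$ and $y$, while the $r=0$ term vanishes by the defining identity of $\mb{M}^{p}_{\eta}$ in $\hamz$; hence $w_{p}^\top \ham^{k} e^{\eta}_{\mb{M}} = 0$. For cospectrality, take $f = e_{u}+e_{v}$ and $g = e_{u}-e_{v}$: if $p = +1$ the rightmost factor $v_{r}^{\prime\top}\hamz^{a_{r}}(e_{u}-e_{v}) = (e_{u}-e_{v})^\top \hamz^{a_{r}} v_{r}^{\prime} = w_{p}^\top \hamz^{a_{r}} v_{r}^{\prime}$ vanishes; if $p = -1$ the leftmost factor $(e_{u}+e_{v})^\top \hamz^{a_{0}} v_{1} = w_{p}^\top \hamz^{a_{0}} v_{1}$ vanishes; and in both cases the $r=0$ term $(e_{u}+e_{v})^\top \hamz^{k}(e_{u}-e_{v})$ vanishes by cospectrality of $\hamz$. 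The main obstacle I anticipate is the bookkeeping in the expansion, namely setting it up cleanly enough that the single scalar-factorization identity covers all three applications simultaneously, together with the case analysis on $p = \pm 1$ needed to route the vanishing argument either to the left or to the right end of each summand.
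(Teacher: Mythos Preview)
Your proof is correct, and the strategy is genuinely cleaner than the paper's. The paper proves the same theorem by partitioning closed walks from $u$ according to the \emph{first} new edge traversed (expressing each walk as a $\hamz$-segment, one new edge, and then an $\ham$-segment back), then repeating the trick on the return leg to expose the \emph{last} new edge as well; this forces an explicit case split over the three pieces $\mb{X}\setminus\mb{Z}$, $\mb{Y}\setminus\mb{Z}$, $\mb{Z}$ and a fair amount of index-level bookkeeping before the multiplet conditions can be invoked. Your observation that the modification is exactly the rank-two symmetric perturbation $\ham=\hamz+xy^\top+yx^\top$ absorbs the overlap case analysis at the outset, and the full non-commutative expansion together with the scalar factorization lets a single ``boundary factor vanishes'' argument handle cospectrality and preservation of \emph{every} parity-$p$ multiplet in one stroke. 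The trade-off is that you expand all the way down to $\hamz$-segments (summing over all $r$), whereas the paper only peels off two new edges and leaves an $\ham$-block in the middle; your version has more terms but each is trivially zero, which is arguably the better bargain. The parity-dependent routing of the vanishing factor to the left end ($p=-1$) or right end ($p=+1$) in the cospectrality check is a nice touch and is exactly what is needed.
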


\noindent
The above theorem, in contrast to the extension of a graph by external vertices in \cref{thm:multipletExtension}, allows for the internal modification of the graph itself while keeping the cospectrality of a given vertex pair.
In particular, the \emph{topology} of the graph may be changed by adding new edges or deleting existing ones.
Before showing examples using \cref{thm:multipletExtension,thm:multipletInterconnection}, let us also note the following.

\begin{remark} \label{rem:cospVertInterconnection}
\normalfont
By \cref{thm:multipletInterconnection}, one can interconnect a cospectral pair $\{u,v\}$ (which corresponds to a walk doublet relative to itself) to itself by setting $\mb{X} = \mb{Y} = \{u,v\}$ and adding an edge between $u,v$ as well as loops on $u,v$, all of equal arbitrary weight (added to possible existing edges), while keeping their cospectrality.
Then, by Lemma 3.1 of Ref.\,\cite{Eisenberg2019DM3422821PrettyGoodQuantumState}, those loops can be removed, with $u,v$ still remaining cospectral.
In other words, two cospectral vertices $u,v$ of a graph can be interconnected or disconnected without affecting their cospectrality.
\end{remark}

To show \cref{thm:multipletExtension,thm:multipletInterconnection} in action, we will now apply them to the example graphs in \cref{fig:exampleUniformMultiplets}\,(a) and \cref{fig:exampleNonuniformMultiplets} of the previous section, whose walk multiplet structure has already been analyzed. 
With the first example, we showcase the cospectrality-preserving extension of a graph; via a uniform walk multiplet, via a combination of overlapping uniform multiplets, and finally by connecting another arbitrary graph to it.

\begin{figure*} [t] 
\centering
\includegraphics[max size={0.9\textwidth}]{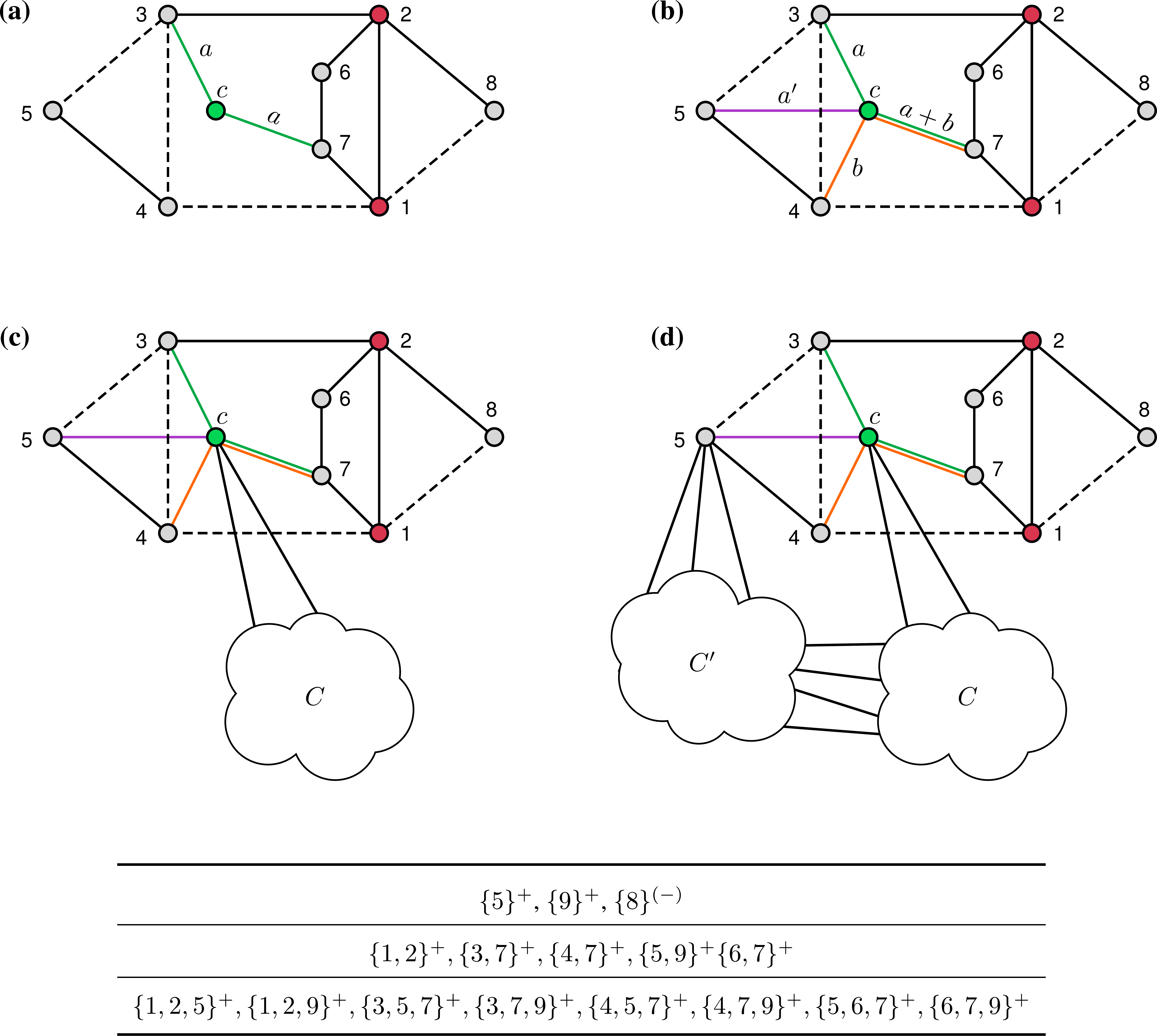}
\caption{
Extension of a graph via walk multiplets, using \cref{cor:singletGraphExtension} and \cref{thm:multipletInterconnection}; see \cref{ex:exampleSingletExtension} for details.
The graph of \cref{fig:exampleUniformMultiplets}\,(a) is successively extended by 
\textbf{(a)} connecting a new vertex $c = 9$ symmetrically to the even uniform walk doublet $\{3,7\}^+$ relative to the cospectral pair $\{1,2\}$ with weight ${a}$,
\textbf{(b)} further connecting $c$ to the even singlet $\{5\}^+$ with weight ${a'}$ and to the even uniform doublet $\{4,7\}^+$ with weight ${b}$,
\textbf{(c)} connecting an arbitrary graph $C$ (cloud) to the even walk singlet $c$ via any number of edges with arbitrary weights, and 
\textbf{(d)} connecting another graph $C'$ to the even walk singlet $\{5\}^+$ and then $C'$ to $C$ in an arbitrary manner, forming a larger arbitrary graph  connected to vertices $5$ and $c$.
In all steps, the cospectrality of $\{1,2\}$ as well as the uniform walk multiplets listed in the table below (for up to cardinality $3$) are preserved.
The graph in (b) is the ``weighted cone'' of the graph in \cref{fig:exampleUniformMultiplets}\,(a) over subset $\{3,4,5,7\}$ with weight tuple $\gamma = (\gamma_3, \gamma_4, \gamma_5, \gamma_7) = ({a}, {b}, {a'}, {a}+{b})$.
Also, $\{(3)_{{a}},(4)_{{b}},(5)_{{a'}},(7)_{{a}+{b}}\}^+$ is an even nonuniform walk quadruplet relative to $\{1,2\}$ with the same weight tuple $\gamma$.
}
\label{fig:exampleSingletExtension}
\end{figure*}

\begin{example} \label{ex:exampleSingletExtension}
\normalfont
In \cref{fig:exampleSingletExtension}\,(a) we have modified the graph of \cref{fig:exampleUniformMultiplets}\,(a) by connecting a new vertex $c=9$ to the even uniform walk doublet $\{3,7\}^+$ relative to the cospectral pair $\{1,2\}$ with weight ${a}$.
In the terminology of \cref{thm:multipletExtension}, this new graph is the cone $\ham$ of the graph in \cref{fig:exampleUniformMultiplets}\,(a), $\hamz$, over the subset $\mul = \{3,7\}$ with a weight tuple $\gamma = (\gamma_3, \gamma_7) = ({a},{a})$. 
By \cref{thm:multipletExtension}, vertex $c$ then forms an even singlet and all even multiplets of the graph $\hamz$ in \cref{fig:exampleUniformMultiplets}\,(a) are still present in the new graph of \cref{fig:exampleSingletExtension}\,(a), as confirmed in the table at the bottom of the figure.
Now, in \cref{fig:exampleSingletExtension}\,(b) we further connect $c$ to other vertices in $H$, without breaking the cospectrality of $\{1,2\}$ or its walk equivalence to any even multiplet.
Indeed, by \cref{thm:multipletInterconnection}, vertex $c$ can be connected to the even singlet $\{5\}^+$ with some weight ${a'}$.
We can---again by \cref{thm:multipletInterconnection}---additionally connect $c$, with weight ${b}$, to the even uniform doublet $\{4,7\}^+$ which \emph{overlaps} with the already connected one $\{3,7\}^+$.
As a result, the edge $(c,7)$ now has weight ${a}+{b}$.
Of course, these successive connections amount to the final graph simply being the weighted cone of the initial one with tip $c$ over $\{3,4,5,7\}$ with weight tuple $\gamma = (\gamma_3, \gamma_4, \gamma_5, \gamma_7) = ({a},{b},{a'},{a}+{b})$.
Thus, $\{(3)_{a},(4)_{b},(5)_{a'},(7)_{a+b}\}^+$ is an even nonuniform walk quadruplet; see \cref{rem:remarkUnion}. 
In \cref{fig:exampleSingletExtension}\,(c) we make use of \cref{cor:singletGraphExtension} and connect a whole graph $C$, represented by a cloud since it can be just any graph, to the even singlet $c$ via any number of edges with arbitrary weights---again preserving cospectrality and walk equivalence of $\{1,2\}$.
In \cref{fig:exampleSingletExtension}\,(d), we have connected another cloud graph $C'$ to the walk singlet $\{5\}^+$.
This latter cloud $C'$ can finally be connected---by \cref{thm:multipletInterconnection}---in any arbitrary way to $C$ into a larger cloud graph, since both $\{5\}^+$ and $\{c\}^+$ are even singlets (as are all cloud vertices connected to them).
Note that point (iii) of \cref{thm:multipletExtension} means that walk multiplets of the original graph $\hamz$ with parity \emph{opposite} to that of $\mul_\gamma^p$ are \emph{not} necessarily present in the new graph (cone) $\ham$.
Indeed, in the present example with $p=+1$ all but one odd walk multiplet of \cref{fig:exampleUniformMultiplets}\,(a) (the walk singlet $\{8\}^-$) disappeared in \cref{fig:exampleSingletExtension}\,(a)--(d).
\end{example}

\noindent
As we see, using \cref{thm:multipletExtension} together with  \cref{cor:singletGraphExtension} and \cref{thm:multipletInterconnection}, given a graph $\ham$ with cospectral vertices $u,v$ one can: (1) generate walk singlets by connecting new vertices to existing walk multiplets, (2) connect an arbitrary new subgraph to such a singlet, and subsequently (3) even interconnect such subgraphs. In other words, we now see that, starting from a small graph with cospectral vertices $u$ and $v$, one can construct \emph{arbitrarily} complex graphs maintaining this cospectrality, using the concept and rules for the introduced walk multiplets.

Let us here also corroborate the necessity of equal parity of two walk multiplets for their combination to be a multiplet (see \cref{rem:remarkUnion}), by a counterexample.
In \cref{fig:exampleSingletExtension}, $\{5\}^+$, $\{8\}^-$ are walk singlets of opposite parity relative to the cospectral pair $\{1,2\}$.
Assume, now, that these two singlets can be combined into a walk doublet $\mathbb{C}_\epsilon^p$ relative to $\{1,2\}$ with subset $\mathbb{C} = \{5,8\}$, weight tuple $\epsilon = (a,b)$ and parity $p = \pm 1$.
Then, by \cref{thm:multipletExtension}, connecting a new vertex $c'$ to $\mathbb{C}$ via edges with weights $a,b$ would not break the cospectrality.
However, this is not the case.
Indeed, in the extended graph we would get $\comp{\ham^5}{1}{1} = \comp{\ham^5}{2}{2} - 4ab$, violating \cref{eq:cospectralDefinition} for $k = 5$ if $ab \neq 0$.
This means that only one of the vertices $\{5\}^+$ and $\{8\}^-$ may be connected to $c'$ ($a = 0$ or $b = 0$) to keep $\{1,2\}$ cospectral.
In other words, \emph{either} even \emph{or} odd walk multiplets can generally be simultaneously connected to a new vertex while keeping the cospectrality of the associated cospectral pair.

In the following example, we demonstrate how the topology of a graph itself can be modified, i.\,e. without extending it by new vertices, while preserving a cospectral pair.

\begin{example} \label{ex:multipletInterconnection}
\normalfont
In \cref{fig:exampleNonuniformMultipletsInterconnection}, we apply \cref{thm:multipletInterconnection} to the graph of \cref{fig:exampleNonuniformMultiplets}, resulting in graphs with the same vertices as in the original graph but with some of them connected differently.
Specifically, in \cref{fig:exampleNonuniformMultipletsInterconnection}\,(a), we interconnect the even uniform walk doublet $\{(3,7)_a\}^+$ with the even nonuniform walk triplet $\{(1,5)_b,(4)_{2b}\}^+$ (see list of walk multiplets in \cref{fig:exampleNonuniformMultiplets}).
According to \cref{thm:multipletInterconnection}, $\{u,v\} = \{2,6\}$ remains cospectral if we add the product $ab$ to the edge weights between each of the vertices $3,7$ of the doublet and vertices $1,5$ of the triplet, and $2ab$ to the edge weights between $3,7$ and $4$, as shown in the figure.
Thus, the \emph{new} edges $(1,3)$, $(3,4)$, $(3,5)$, and $(4,7)$ are created in the resulting graph, so that the graph topology has been modified.
Note, though, that the multiplet interconnection procedure comes with a partial restriction on the weights of the new graph.
For instance, starting in \cref{fig:exampleNonuniformMultipletsInterconnection}\,(a) with an unweighted graph and setting also $a = b = 1$, the edges $(1,7)$, $(3,4)$, $(4,7)$, $(5,7)$ in the new graph have weight $2$ (and the rest $1$); 
that is, the new graph cannot be unweighted.
In \cref{fig:exampleNonuniformMultipletsInterconnection}\,(b), we interconnect $\{(3,7)_a\}^+$ with $\{(4)_b,(8)_{-b}\}^+$.
Starting with the original graph unweighted and setting $a = b = 1$, the addition of edge weights according to \cref{thm:multipletInterconnection} \emph{removes} the edges $(3,8)$ and $(7,8)$, while adding $(3,4)$ and $(4,7)$, with $\{2,6\}$ remaining cospectral.
This demonstrates how walk multiplet interconnections can be used to disconnect vertices of a graph while preserving the associated cospectrality.
\end{example}

\begin{figure*} [t] 
\centering
\includegraphics[max size={0.85\textwidth}]{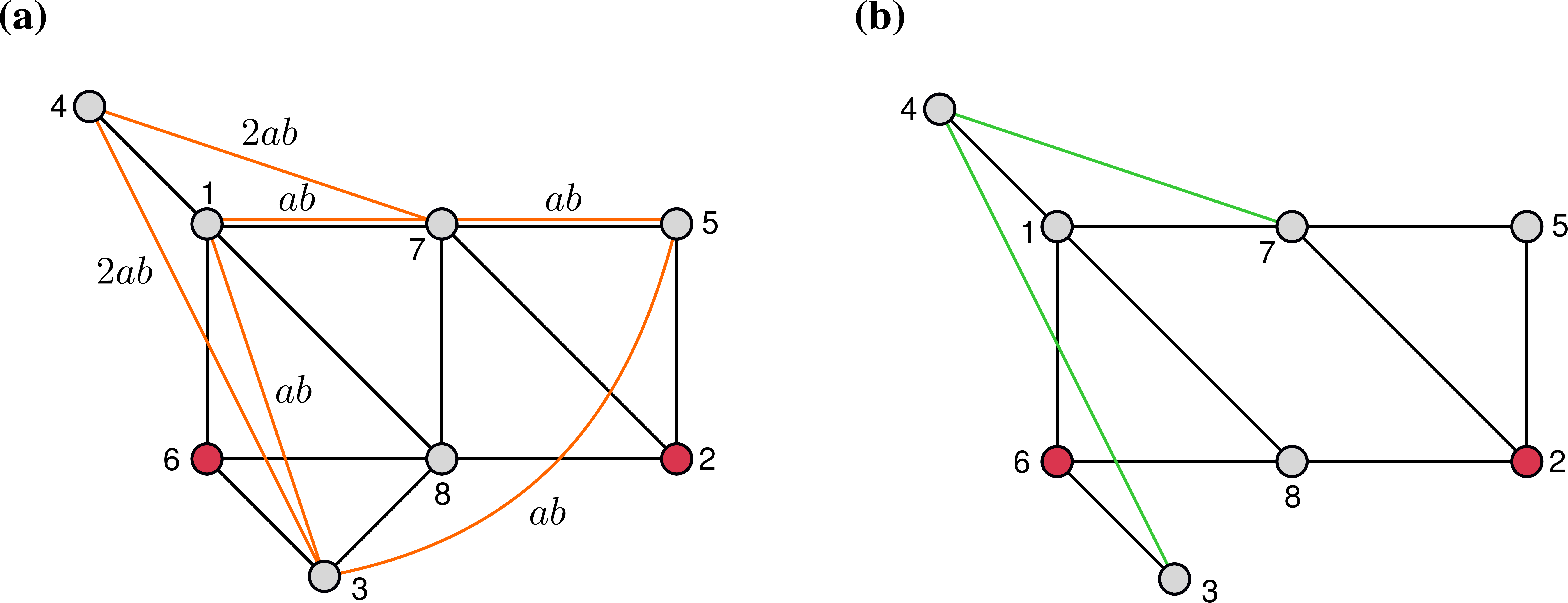}
\caption{
Two modifications of the graph depicted in \cref{fig:exampleNonuniformMultiplets} which keep the cospectrality of the vertices $2$ and $6$ while changing the graph topology, using \ref{thm:multipletInterconnection}; see \cref{ex:multipletInterconnection}.
In \textbf{(a)} we interconnect the walk multiplets $\{(3,7)a\}^+$ and $\{(1,5)_b,(4)_{2b}\}^+$, with added edge weights as indicated, and 
in \textbf{(b)} we interconnect walk multiplets $\{(3,7)_a\}^+$ with $\{(4)_a,(8)_{-a}\}^+$, starting from the original graph unweighted and setting $a = 1$.
}
\label{fig:exampleNonuniformMultipletsInterconnection}
\end{figure*}

By \cref{thm:multipletExtension}, the cospectrality of a vertex pair is preserved in the weighted cone over a walk multiplet of a graph with the same weight tuple $\gamma$, with the tip of the cone then being a walk singlet.
We now ask for the reverse:
When the cospectrality of $u,v$ is preserved under a single-vertex addition, is that vertex \emph{necessarily} a walk singlet relative to $u,v$?
The affirmative answer is given by the following theorem, which also makes a similar statement for the case of single vertex deletions.

\begin{restatable}[Preserved cospectrality under single vertex additions or deletions]{thm}{uspSetsAreMultiplets} 
\label{thm:uspSetsAreMultiplets}
Let $\hamz$ be a graph with vertex set $\vertSet$ and with two cospectral vertices $u,v \in \vertSet$. Then 
\begin{itemize}
	\item[(i)] The cospectrality of $u$ and $v$ is preserved in the cone $\ham$ of $\hamz$ over a subset $\mul \subseteq \vertSet$ with weight tuple $\gamma = (\gamma_m)_{m \in \mul}$ if and only if $\mul_\gamma^p$ is a walk multiplet relative to $u,v$.
	\item[(ii)] The cospectrality of $u$ and $v$ is preserved in the graph $\hamt = \hamz \setminus c$ (obtained from $\hamz$ by removing the vertex $c \in \vertSet$) if and only if $c$ is a walk singlet in $\hamz$ relative to $u,v$.
\end{itemize}
\end{restatable}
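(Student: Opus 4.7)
The plan is to read cospectrality of $u,v$ and walk-equivalence of $u,v$ through the formal resolvent $(I - tM)^{-1} \in \mathbb{R}^{N\times N}[\![t]\!]$, which serves as a generating function for matrix powers via $\comp{(I-tM)^{-1}}{i}{j} = \sum_{k \geqslant 0} \comp{M^k}{i}{j} \, t^k$. In this language, cospectrality of $u,v$ in $M$ is the identity $\comp{(I-tM)^{-1}}{u}{u} = \comp{(I-tM)^{-1}}{v}{v}$ in $\mathbb{R}[\![t]\!]$, and $\mul_\gamma^p$ is a walk multiplet relative to $u,v$ if and only if $\vcomp{(I-tM)^{-1} e_\mul^\gamma}{u} = p \, \vcomp{(I-tM)^{-1} e_\mul^\gamma}{v}$ in $\mathbb{R}[\![t]\!]$, as follows immediately from \cref{eq:multipletDef} by reading off coefficients in $t$. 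The two workhorses will be the block Schur complement relative to the ``extra'' vertex $c$ (the new tip in (i), the deleted vertex in (ii)) together with Sherman--Morrison for the resulting rank-one correction.

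For point (i), the ``if'' direction is exactly (i) of \cref{thm:multipletExtension}, so only ``only if'' remains. Writing the cone as $\ham = \bigl[\begin{smallmatrix}\hamz & e_\mul^\gamma \\ (e_\mul^\gamma)^\top & 0\end{smallmatrix}\bigr]$ and taking the Schur complement of the lower-right scalar of $I - t\ham$, I obtain the rank-one perturbed matrix $(I-t\hamz) - t^2 e_\mul^\gamma (e_\mul^\gamma)^\top$. Sherman--Morrison, combined with the symmetry $(I-t\hamz)^{-\top} = (I-t\hamz)^{-1}$, yields the formal-power-series identity
\begin{equation*}
\comp{(I-t\ham)^{-1}}{u}{u} = \comp{(I-t\hamz)^{-1}}{u}{u} + \frac{t^2 \bigl(\vcomp{(I-t\hamz)^{-1} e_\mul^\gamma}{u}\bigr)^{2}}{1 - t^2 (e_\mul^\gamma)^\top (I-t\hamz)^{-1} e_\mul^\gamma}
\end{equation*}
together with its $v$-analogue. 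Subtracting the two and invoking cospectrality in $\hamz$, preservation of cospectrality in $\ham$ reduces to the square equation $\bigl(\vcomp{(I-t\hamz)^{-1} e_\mul^\gamma}{u}\bigr)^{2} = \bigl(\vcomp{(I-t\hamz)^{-1} e_\mul^\gamma}{v}\bigr)^{2}$. Since $\mathbb{R}[\![t]\!]$ is an integral domain, this forces $\vcomp{(I-t\hamz)^{-1} e_\mul^\gamma}{u} = p \, \vcomp{(I-t\hamz)^{-1} e_\mul^\gamma}{v}$ for some $p \in \{+1,-1\}$, which is exactly the walk multiplet condition.

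For point (ii), I use the mirror decomposition $\hamz = \bigl[\begin{smallmatrix}\hamt & f \\ f^\top & g\end{smallmatrix}\bigr]$ with $f$ the column of edge weights out of $c$ and $g = \hamz_{c,c}$ (possibly nonzero). The same Schur-complement/Sherman--Morrison calculation, this time relative to the scalar $(1-tg)$, produces two parallel identities that share the common denominator $D(t) = (1-tg) - t^2 f^\top (I-t\hamt)^{-1} f$: a diagonal one whose $u{-}v$ difference is proportional to $(\vcomp{(I-t\hamt)^{-1}f}{u})^2 - (\vcomp{(I-t\hamt)^{-1}f}{v})^2$, and an off-diagonal one in which $\comp{(I-t\hamz)^{-1}}{u}{c}$ is proportional to $\vcomp{(I-t\hamt)^{-1}f}{u}$. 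Because $D(t)$ has constant term $1$ it is a unit in $\mathbb{R}[\![t]\!]$, so given cospectrality in $\hamz$, cospectrality in $\hamt$ is equivalent to $(\vcomp{(I-t\hamt)^{-1}f}{u})^2 = (\vcomp{(I-t\hamt)^{-1}f}{v})^2$, while $c$ being a walk singlet in $\hamz$ with parity $p$ is equivalent to $\vcomp{(I-t\hamt)^{-1}f}{u} = p \, \vcomp{(I-t\hamt)^{-1}f}{v}$. The integral-domain argument from (i) identifies these two conditions and yields both directions of (ii) simultaneously.

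The main obstacle I anticipate is purely bookkeeping---keeping the block-inverse expressions aligned so that the two Sherman--Morrison factors collapse into a single square (which is what exploits $\hamz = \hamz^\top$), and confirming that the various Schur-complement denominators are formal-power-series units---rather than any conceptual issue. The whole equivalence ultimately rides on the simple integral-domain fact that $f^2 = g^2$ in $\mathbb{R}[\![t]\!]$ forces $f = \pm g$, which is what pins down a single global parity $p$.
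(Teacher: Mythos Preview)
Your proof is correct and takes a genuinely different route from the paper's own argument. The paper works directly with the walk decomposition of $\comp{\ham^k}{u}{u}$: for part~(i) it defines the bilinear differences $\diffb_{\ell,\ell'} = \bmulz{\ell}{u}{\mul}\bmulz{\ell'}{u}{\mul} - \bmulz{\ell}{v}{\mul}\bmulz{\ell'}{v}{\mul}$ and runs an explicit induction on the walk length to extract a single parity $p$, while for part~(ii) it handles the two directions separately, the reverse direction being routed back through part~(i) and \cref{thm:multipletExtension}. Your resolvent/generating-function approach packages all walk lengths into $(I-tM)^{-1}\in\mathbb{R}[\![t]\!]$, so that the Schur complement plus Sherman--Morrison reduces the question in one stroke to an identity of the form $f(t)^2 = g(t)^2$ in $\mathbb{R}[\![t]\!]$; the induction in the paper is then replaced wholesale by the observation that $\mathbb{R}[\![t]\!]$ is an integral domain, which forces $f=\pm g$ and hence a single global parity. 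This is more conceptual and treats both directions of both parts symmetrically. What the paper's combinatorial decomposition buys, on the other hand, is that it stays entirely at the level of walk counts and requires no algebraic machinery beyond the definition, and it generalizes transparently to the comment on multiplet removal that follows the proof. One small point worth making explicit in your write-up: the factor $t^2$ in the numerator of the Sherman--Morrison correction is what guarantees that the squared identity is not vacuously satisfied, and the unit property of the denominators (constant term $1$) is indeed exactly what is needed to divide in $\mathbb{R}[\![t]\!]$.
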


\noindent
Recall that the tip of the cone in part (i) is a walk singlet relative to $u,v$ (by \cref{thm:multipletExtension}). 
In part (ii), a walk singlet is removed, without breaking the cospectrality of $\{u,v\}$.
Thus, the theorem implies that the only way to add a single vertex to a graph, or to remove a single vertex from it, without breaking the cospectrality of two vertices $u,v$, is if that vertex is a walk singlet relative to $u,v$.

A word of caution, though:
Whereas walk \emph{singlets} can safely be removed from a graph without destroying the associated cospectral pair, the same is not true for larger walk multiplets in general.
An interesting special case where a multiplet can be removed is when (i) its vertices are pairwise cospectral and (ii) relative to each such cospectral pair its remaining vertices are singlets, as explained in the proof of \cref{thm:uspSetsAreMultiplets} in the Appendix.
An example of this is the walk anti-doublet $\{3,4\}^-$ in \cref{fig:exampleUniformMultiplets}\,(a), whose removal does preserve the cospectrality of the pair $\{1,2\}$.

Combining \cref{thm:multipletExtension} with \cref{thm:uspSetsAreMultiplets} results in the following conclusion regarding walk singlets.

\begin{corollary} \label{cor:singletsMultiplets}
 A vertex $c$ of a graph is a walk singlet relative to cospectral vertices $u,v$ if and only if it is exclusively connected via edges with weight tuple $\gamma = (\gamma_m)_{m \in \mul}$ to a walk multiplet $\mul_\gamma^p$ relative to $u,v$.
\end{corollary}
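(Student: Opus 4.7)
\medskip
\noindent
The plan is to prove both directions of the biconditional by a single algebraic identity, obtained by splitting off the last edge of each walk that terminates at $c$. Assume (as is natural) that $c \notin \{u,v\} \cup \mul$ and that $\ham_{c,c}=0$. The hypothesis ``$c$ is exclusively connected to $\mul$ via edges with weights $\gamma_m = \ham_{c,m}$'' means that $\ham_{j,c}=0$ for $j \notin \mul$, so the last-edge decomposition of a walk ending at $c$ gives, for every vertex $s$ and every $k \geqslant 1$,
\[
 \comp{\ham^{k}}{s}{c} \;=\; \sum_{j} \comp{\ham^{k-1}}{s}{j}\, \ham_{j,c} \;=\; \sum_{m \in \mul} \gamma_m\, \comp{\ham^{k-1}}{s}{m}.
\]
This identity, applied with $s=u$ and $s=v$, is the backbone of both implications.

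\medskip
\noindent
For the direction ``$\mul_\gamma^p$ is a walk multiplet $\Rightarrow$ $c$ is a walk singlet'', I would invoke \cref{eq:multipletDef} for $\mul_\gamma^p$ relative to $u,v$ to replace the $u$-side sum by $p$ times the $v$-side sum in the identity above. This yields $\comp{\ham^{k}}{u}{c} = p\, \comp{\ham^{k}}{v}{c}$ for every $k \geqslant 1$; at $k=0$ both sides vanish because $c \notin \{u,v\}$. Applying \cref{def:UniformMultiplets} to the singleton subset $\{c\}$ (with the trivial weight tuple equal to $1$) then identifies $c$ as a walk singlet of parity $p$ relative to $u,v$.

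\medskip
\noindent
For the converse ``$c$ is a walk singlet $\Rightarrow$ $\mul_\gamma^p$ is a walk multiplet'', I would insert $\comp{\ham^{k}}{u}{c} = p\, \comp{\ham^{k}}{v}{c}$ (the defining condition of the singlet) into the identity above, obtaining
\[
 \sum_{m \in \mul} \gamma_m\, \comp{\ham^{k-1}}{u}{m} \;=\; p \sum_{m \in \mul} \gamma_m\, \comp{\ham^{k-1}}{v}{m} \qquad \forall\, k \geqslant 1,
\]
which is exactly \cref{eq:multipletDef} for the tuple $\gamma$ and parity $p$, and hence the walk-multiplet condition from \cref{def:UniformMultiplets} (invoking the Cayley--Hamilton argument already used after \cref{eq:multipletWalkMatrix} to pass freely between ``for all $k\geqslant 0$'' and ``for $k \in [\![0,N-1]\!]$'').

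\medskip
\noindent
I do not expect a substantial obstacle: the proof is essentially one line per direction once the last-edge identity is written down. The only delicate point is the edge-case bookkeeping; in particular, if one wished to allow a loop on $c$, an extra term $\ham_{c,c}\comp{\ham^{k-1}}{s}{c}$ would appear in the identity, but since it multiplies the same quantity $\comp{\ham^{k-1}}{s}{c}$ on both sides, a straightforward induction on $k$ absorbs it and leaves both arguments unchanged. Alternatively, the same conclusion can be reached by combining \cref{thm:uspSetsAreMultiplets}\,(ii) (to delete $c$ preserving cospectrality), \cref{thm:uspSetsAreMultiplets}\,(i) (to reconstruct the cone and conclude $\mul_\gamma^p$ is a walk multiplet of $\ham\setminus c$), and point (iii) of \cref{thm:multipletExtension} (to lift the multiplet back to $\ham$); this route is more conceptual but relies on heavier machinery than the direct walk-counting argument sketched above.
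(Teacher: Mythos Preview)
Your proof is correct. The paper itself does not spell out an argument for this corollary but simply presents it as resulting from the combination of \cref{thm:multipletExtension} and \cref{thm:uspSetsAreMultiplets}---which is precisely the ``alternative'' route you sketch in your final paragraph. Your primary argument, the one-line last-edge decomposition $\comp{\ham^{k}}{s}{c}=\sum_{m\in\mul}\gamma_m\comp{\ham^{k-1}}{s}{m}$, is a genuinely different and more elementary approach: it works entirely inside $\ham$ without ever deleting $c$ or invoking the cone construction, and in particular it sidesteps the induction used in the proof of \cref{thm:uspSetsAreMultiplets}\,(i). The paper's route has the advantage of situating the corollary as a structural consequence of the single-vertex addition/deletion machinery developed immediately before it, which fits the section's narrative; but for this specific biconditional your direct walk-counting argument is shorter and more transparent, and your handling of the loop case by induction is also sound.
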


\noindent
Let us now make the link to where we started (in \cref{sec:USPs}), with the notion of USPs.
Recall that an USP is a single vertex to which an arbitrary new graph can be connected, or which can also be removed, without breaking the cospectrality of a vertex pair.
While it is clear from \cref{thm:multipletExtension} that any walk singlet is an USP, one might ask if also the \emph{reverse} is true, that is, whether any USP is a walk singlet.
The removal of a walk singlet is covered by \cref{thm:uspSetsAreMultiplets}\,(ii).
Regarding the connection of arbitrary graphs, we have the following.

\begin{corollary}[USPs are singlets] \label{cor:graphConnectedToSinglet}
If the cospectrality of a vertex pair $\{u,v\}$ of a graph $\ham$ is preserved when connecting an arbitrary graph $C$ to a single vertex $c$ of $\ham$, via edges of arbitrary weights, then $c$ is a walk singlet relative to $\{u,v\}$.
\end{corollary}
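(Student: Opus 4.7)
The plan is to exploit the strength of the hypothesis: cospectrality must be preserved for \emph{every} choice of arbitrary graph $C$ and connecting edge weights, so I am free to specialize to the simplest nontrivial choice and invoke \cref{thm:uspSetsAreMultiplets}\,(i).

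Concretely, I would take $C$ to consist of a single new vertex $c'$, joined to $c$ by a single edge of some arbitrary nonzero weight $w$. The resulting extended graph is then exactly the weighted cone of $\ham$ over the singleton subset $\mul = \{c\}$ with weight tuple $\gamma = (w)$, in the sense of \cref{def:weightedCone}. By the corollary's hypothesis, the cospectrality of $u,v$ survives this particular extension.

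I would then apply the ``only if'' direction of \cref{thm:uspSetsAreMultiplets}\,(i) to conclude that $\{c\}_{(w)}^{p}$ is a walk multiplet of $\ham$ relative to $u,v$ for some parity $p \in \{+1,-1\}$. Unpacking \cref{def:UniformMultiplets} in the singleton case, together with \cref{eq:multipletDef}, gives $w\,\comp{\ham^{k}}{u}{c} = p\,w\,\comp{\ham^{k}}{v}{c}$ for every $k \in \mathbb{N}$. Dividing by $w \neq 0$ produces exactly the walk-singlet identity $\comp{\ham^{k}}{u}{c} = p\,\comp{\ham^{k}}{v}{c}$, so $c$ is an even or odd walk singlet relative to $\{u,v\}$, as claimed.

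There is no real obstacle here: the content is already encapsulated in \cref{thm:uspSetsAreMultiplets}\,(i), and the only conceptual step is to notice that the much stronger ``arbitrary $C$'' hypothesis is already tripped by the minimal cone consisting of a single pendant vertex. A small sanity point worth mentioning in the writeup is that the parity $p$ extracted in this way is a property of $\ham$ itself and not of $w$, since the derived identity is homogeneous in $w$; the same $p$ therefore governs every admissible choice of $C$ and of connecting weights, consistent with \cref{cor:singletGraphExtension} applied in the reverse direction.
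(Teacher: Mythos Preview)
Your argument is correct and follows the same essential strategy as the paper: specialize the arbitrary $C$ to a single pendant vertex $c'$ and invoke \cref{thm:uspSetsAreMultiplets}\,(i). The paper's own explanation phrases the conclusion of that step as ``$c'$ must then be a walk singlet'' and then appeals to \cref{cor:singletsMultiplets} to pass from $c'$ back to $c$, whereas you apply \cref{thm:uspSetsAreMultiplets}\,(i) directly to the base subset $\{c\}$ and conclude in one step that $c$ is a walk singlet in $\ham$; your route is slightly more direct but the content is the same.
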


\noindent
This statement can be easily understood from the above.
Indeed, since $C$ is an arbitrary graph, we can choose it to be a single vertex $c'$.
If the cospectrality of $u,v$ is preserved under this addition, then by \cref{thm:uspSetsAreMultiplets}\,(i), $c'$ must then be a walk singlet.
But by \cref{cor:singletsMultiplets}, $c$ must be a walk singlet as well. 
Thus, we have that every USP is a walk singlet.

Before we proceed, let us review the above, starting with a recapitulation of the concept of cospectral vertices.
Known in molecular graph theory as ``isospectral points'', this concept can be seen as a \emph{generalization} of exchange symmetry \cite{Rucker1992JMC9207UnderstandingPropertiesIsospectralPoints}.
Indeed, any two vertices $u$ and $v$ that are exchange symmetric are also cospectral, but the reverse is not necessarily the case.
Similar to the case of exchange symmetries, one can then draw powerful conclusions from the presence of cospectral vertices.
For example, one can use the presence of cospectral vertices to express the characteristic polynomial of the underlying matrix $\ham$ in terms of smaller polynomials \cite{Rontgen2020PRA101042304DesigningPrettyGoodState}.
In quantum physics it has been shown \cite{Godsil2017AMStronglyCospectralVertices} that cospectrality of $u$ and $v$ is a necessary condition for so-called perfect state transfer between these two vertices, which is important in the realization of quantum computers.
In general, if two vertices $u$ and $v$ are cospectral, then all eigenvectors have (in the case of degeneracies, can be chosen to have) definite parity on these two vertices \cite{Eisenberg2019DM3422821PrettyGoodQuantumState}.
The implications of such local parity depend, of course, on what the underlying matrix $\ham$ represents, but can be quite impactful.
In network theory \cite{Kempton2020LAaiA594226CharacterizingCospectralVerticesIsospectral,Smith2019PA514855HiddenSymmetriesRealTheoretical}, for example, the local parity of eigenvectors implies that two cospectral vertices have the same ``eigenvector centrality'', which is a measure for their importance in the underlying network.

Irrespective of these powerful implications of cospectrality, however, one might object that fulfilling its defining \cref{eq:cospectralDefinition} is rather difficult, especially in larger graphs comprising thousands of vertices.
What we have shown above is that fulfilling \cref{eq:cospectralDefinition} is, on the contrary, \emph{rather easy}: 
Given a small graph $G$ with cospectral vertices $u$ and $v$, one can easily embed $G$ into a (much) larger graph $G'$ by suitably connecting some vertices of $G'$ to the walk multiplets of $u$ and $v$.
In other words, we have shown that cospectrality does not necessarily rely on \emph{global} fine-tuning.
This viewpoint-changing finding, however, is just the implication of a much more important insight. Namely, that the matrix powers of $\ham$---which are used to identify walk multiplets---are a source of detailed information about the underlying graph, as we will demonstrate in the following.

\section{Eigenvector components on walk multiplets} \label{sec:multipletsAndEigenvectors}

Having seen how multiplets can be used to extend a graph whilst keeping the cospectrality of vertices, we now analyze their relation to the eigenvectors of $\ham$.
To this end, we first choose the orthonormal eigenvector basis according to the following Lemma.

\begin{lemma}[Lemma 2.5 of \cite{Eisenberg2019DM3422821PrettyGoodQuantumState}] \label{lem:eigenvectorChoice}
	Let $\ham$ be a symmetric matrix, with $u$ and $v$ cospectral.
	Then the eigenvectors $\{\phi\}$ of $\ham$ are (or, in the case of degenerate eigenvalues, can be chosen) as follows. 
	For each eigenvalue $\lambda$ there is at most one eigenvector $\phi$ with even local parity on $u$ and $v$, i.\,e., $\phi_{u} = \phi_{v} \ne 0$, and at most one eigenvector $\phi$ with odd local parity on $u$ and $v$, i.\,e., $\phi_{u} = -\phi_{v} \ne 0$.
	All remaining eigenvectors for $\lambda$ fulfill $\phi_{u} = \phi_{v} = 0$.
	The even (odd) parity eigenvector can be found by projecting the vector $e_{u} \pm e_{v}$ onto the eigenspace associated with $\lambda$.
\end{lemma}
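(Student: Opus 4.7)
The plan is to work entirely with the spectral decomposition $\ham = \sum_\lambda \lambda E_\lambda$, where each $E_\lambda$ is the orthogonal projector onto the eigenspace $V_\lambda$ associated with eigenvalue $\lambda$. First I would translate the cospectrality condition \cref{eq:cospectralDefinition} into a statement about these projectors: $\comp{\ham^k}{u}{u} = \sum_\lambda \lambda^k \comp{E_\lambda}{u}{u}$, and demanding equality with the analogous expression for $v$ for every $k$ forces $\comp{E_\lambda}{u}{u} = \comp{E_\lambda}{v}{v}$ for every $\lambda$, by a Vandermonde argument over the distinct eigenvalues.

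Fix an eigenvalue $\lambda$ and set $\phi^{\pm}_\lambda = E_\lambda(e_u \pm e_v) \in V_\lambda$. Using symmetry of $E_\lambda$ (so that $\comp{E_\lambda}{u}{v} = \comp{E_\lambda}{v}{u}$) together with the projector identity just derived, a direct evaluation of $\vcomp{\phi^{\pm}_\lambda}{u}$ and $\vcomp{\phi^{\pm}_\lambda}{v}$ yields $\vcomp{\phi^{+}_\lambda}{u} = \vcomp{\phi^{+}_\lambda}{v}$ and $\vcomp{\phi^{-}_\lambda}{u} = -\vcomp{\phi^{-}_\lambda}{v}$. Hence, whenever $\phi^{+}_\lambda$ (respectively $\phi^{-}_\lambda$) is nonzero, it is a genuine eigenvector of $\ham$ for $\lambda$ with even (respectively odd) local parity on $u,v$; if it is zero, no such eigenvector exists in $V_\lambda$, which already accounts for the ``at most one'' clause. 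An analogous expansion gives $(\phi^{+}_\lambda)^\top \phi^{-}_\lambda = \comp{E_\lambda}{u}{u} - \comp{E_\lambda}{v}{v} = 0$, so the two distinguished directions are orthogonal.

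To finish, I would normalize the nonzero $\phi^{\pm}_\lambda$ and complete them to an orthonormal basis of $V_\lambda$ by vectors $\psi$ orthogonal to both. Since $\psi \in V_\lambda$ implies $E_\lambda \psi = \psi$, one has $\vcomp{\psi}{u} + \vcomp{\psi}{v} = \psi^\top (e_u + e_v) = \psi^\top E_\lambda (e_u + e_v) = \psi^\top \phi^{+}_\lambda = 0$ and, analogously, $\vcomp{\psi}{u} - \vcomp{\psi}{v} = 0$, so $\vcomp{\psi}{u} = \vcomp{\psi}{v} = 0$. Carrying this out in each eigenspace produces the required basis, and the defining expression $\phi^{\pm}_\lambda = E_\lambda(e_u \pm e_v)$ is exactly the projection referred to in the lemma. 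The only genuinely nontrivial step is the opening one, passing from the matrix-power condition \cref{eq:cospectralDefinition} to the per-eigenvalue identity $\comp{E_\lambda}{u}{u} = \comp{E_\lambda}{v}{v}$; once that is in hand, the rest reduces to routine linear algebra carried out one eigenspace at a time.
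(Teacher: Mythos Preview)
Your argument is correct. One small point deserves an explicit line: when $\phi^{+}_\lambda = E_\lambda(e_u+e_v)$ is nonzero as a vector, you still need that its $u$- and $v$-components are themselves nonzero (otherwise it would land in the ``$\phi_u=\phi_v=0$'' class rather than the even-parity class as defined in the lemma). This follows at once from
\[
\|\phi^{+}_\lambda\|^2 = (e_u+e_v)^\top E_\lambda(e_u+e_v)
= \comp{E_\lambda}{u}{u}+2\comp{E_\lambda}{u}{v}+\comp{E_\lambda}{v}{v}
= 2\bigl(\comp{E_\lambda}{u}{u}+\comp{E_\lambda}{u}{v}\bigr)
= 2\,\vcomp{\phi^{+}_\lambda}{u},
\]
so $\phi^{+}_\lambda\neq 0 \iff \vcomp{\phi^{+}_\lambda}{u}\neq 0$, and analogously for $\phi^{-}_\lambda$. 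With that line added, the construction of the basis and the verification that the remaining vectors vanish on $u,v$ are airtight.

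As for comparison: the paper does not supply its own proof of this lemma---it is quoted from Eisenberg--Kempton--Lippner and used as a black box---so there is no in-paper argument to set yours against. Your Vandermonde step (passing from equality of all $\comp{\ham^k}{u}{u}$ and $\comp{\ham^k}{v}{v}$ to equality of the individual projector entries $\comp{E_\lambda}{u}{u}=\comp{E_\lambda}{v}{v}$) is, however, precisely the device the authors deploy later in the proof of \cref{thm:multipletZeroSum}, so your approach is entirely in keeping with the paper's toolkit.
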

\begin{remark}
	If the projection of $e_{u} \pm e_{v}$ onto the eigenspace associated with $\lambda$ yields the zero-vector, then the corresponding even (odd) parity eigenvector does not exist.
\end{remark}

\noindent
With this choice, the components of odd and even parity eigenvectors on a walk multiplet obey the following constraint.
\begin{restatable}[Eigenvector components on walk multiplets]{thm}{multipletZeroSum} \label{thm:multipletZeroSum}
Let $\ham = \ham^\top \in \mathbb{R}^{N \times N}$ represent a graph with a pair of  cospectral vertices $u,v$, and let its eigenvectors be chosen according to \cref{lem:eigenvectorChoice}.
Then any eigenvector $\phi$ of $H$ with eigenvalue $\lambda$ and nonzero components of odd (even) parity $p$ on $u,v$,
\begin{equation} \label{eq:localParityNonzero}
 \phi_u = p \, \phi_v \neq 0, \quad p \in \{+1,-1\}, 
\end{equation}
fulfills
\begin{equation} \label{eq:multipletAmplitudes}
 \sum_{m \in \mul} \gamma_m \phi_m = 0
\end{equation}
if and only if $\mul_\gamma^{-p}$ is a walk multiplet relative to $u,v$ with even (odd) parity $-p$ and weight tuple $\gamma = (\gamma_m)_{m \in \mul}$.
\end{restatable}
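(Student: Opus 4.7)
The plan is to reduce the walk multiplet condition for $\mathbb{M}_\gamma^{-p}$ to a statement about overlaps between the vectors $e_u + p\, e_v$ and $e_\mathbb{M}^\gamma$ on each eigenspace of $H$, and then evaluate those overlaps in the chosen basis of \cref{lem:eigenvectorChoice}.

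First I would rewrite the defining condition of \cref{def:UniformMultiplets} for parity $-p$, namely $[W_\mathbb{M}^\gamma]_{u,*} = -p\,[W_\mathbb{M}^\gamma]_{v,*}$, in inner-product form: using \cref{eq:walkmatrixElement} (and \cref{eq:multipletDef} by Cayley--Hamilton), it is equivalent to
\begin{equation*}
  (e_u + p\, e_v)^\top H^k e_\mathbb{M}^\gamma \;=\; 0 \qquad \forall\, k \in \mathbb{N}.
\end{equation*}
Then I would invoke the spectral decomposition $H = \sum_\lambda \lambda E_\lambda$ over the \emph{distinct} eigenvalues of $H$, which yields
\begin{equation*}
  (e_u + p\, e_v)^\top H^k e_\mathbb{M}^\gamma \;=\; \sum_\lambda \lambda^k\,(e_u + p\, e_v)^\top E_\lambda\, e_\mathbb{M}^\gamma.
\end{equation*}
Nonsingularity of the Vandermonde matrix $(\lambda^k)$ over distinct $\lambda$ turns the infinite family of conditions indexed by $k$ into one condition per eigenvalue,
\begin{equation*}
  (e_u + p\, e_v)^\top E_\lambda\, e_\mathbb{M}^\gamma \;=\; 0 \qquad \forall\, \lambda.
\end{equation*}

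Next I would evaluate this overlap in the orthonormal eigenbasis fixed by \cref{lem:eigenvectorChoice}. Writing $E_\lambda = \sum_\phi \phi \phi^\top$ over the chosen eigenvectors of eigenvalue $\lambda$, the overlap becomes $\sum_\phi (\phi_u + p\,\phi_v)\sum_{m\in\mathbb{M}} \gamma_m \phi_m$. By the basis choice, each $\lambda$-eigenspace contains at most one eigenvector with $\phi_u = \phi_v \ne 0$ and at most one with $\phi_u = -\phi_v \ne 0$, while all other eigenvectors at $\lambda$ satisfy $\phi_u = \phi_v = 0$. The factor $\phi_u + p\,\phi_v$ vanishes on the latter, and also on the eigenvector of parity $-p$ (if present); only the eigenvector of parity $p$ (if present) contributes, with $\phi_u + p\,\phi_v = 2\phi_u \ne 0$. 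Hence
\begin{equation*}
  (e_u + p\, e_v)^\top E_\lambda\, e_\mathbb{M}^\gamma \;=\; 2\phi_u \sum_{m\in\mathbb{M}}\gamma_m \phi_m
\end{equation*}
whenever a parity-$p$ eigenvector $\phi$ exists at $\lambda$, and vanishes identically otherwise. Combining the two reductions yields the announced equivalence in both directions: $\mathbb{M}_\gamma^{-p}$ is a walk multiplet iff \cref{eq:multipletAmplitudes} holds for every eigenvector of parity $p$.

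The main obstacle I foresee is the handling of degenerate eigenvalues, where the overlap $(e_u + p\,e_v)^\top E_\lambda e_\mathbb{M}^\gamma$ is \emph{a priori} a sum over many eigenvectors, and an arbitrary orthonormal basis could mix parity-$p$, parity-$(-p)$, and vanishing-on-$\{u,v\}$ contributions. The basis choice of \cref{lem:eigenvectorChoice} is precisely what decouples these contributions so that the walk multiplet condition per eigenvalue collapses to a single scalar equation involving only the parity-$p$ eigenvector; without it, only the "only if" direction would follow cleanly. A minor related care is to verify that the projections of $e_u \pm e_v$ and $e_\mathbb{M}^\gamma$ are handled correctly when the relevant eigenvector simply does not exist at a given $\lambda$, in which case the overlap is trivially zero and the statement for that $\lambda$ is vacuously satisfied on both sides.
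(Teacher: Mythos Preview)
Your proposal is correct and follows essentially the same approach as the paper: both use the spectral decomposition of $H$ to rewrite the multiplet condition $B^{(k)}_{u;\mathbb{M}} + p\,B^{(k)}_{v;\mathbb{M}} = 0$ as a Vandermonde system in the eigenvalues, then invoke the basis of \cref{lem:eigenvectorChoice} to isolate the single parity-$p$ eigenvector per eigenvalue and conclude. The only cosmetic difference is that you group by distinct eigenvalues via the projectors $E_\lambda$ before expanding in the chosen basis, whereas the paper expands directly into the parity-sorted basis $\mathcal{N}^\pm$ and then applies the Vandermonde argument to the subset $\mathcal{N}^{-p}$ (whose eigenvalues are distinct precisely by the lemma); both routes reach the same equivalence.
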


\begin{remark}
\normalfont
It is an interesting---and to the best of our present knowledge unanswered---question whether analogous general statements can be made regarding the eigenvector components on walk multiplets relative to a cospectral pair $\{u,v\}$ for eigenvectors with zero components on $u,v$.
\end{remark}

Let us take a look at the impact of \cref{thm:multipletZeroSum} in an example.
We use a graph we are already familiar with and which has an interesting multiplet structure.

\begin{example} \label{ex:multipletZeroSum}
\normalfont
The graph of \cref{fig:exampleNonuniformMultiplets} has three eigenvectors $\phi^{{\nu}}$ (labeled by $\nu = 1,2,3$) with odd parity on the cospectral pair $\{2,6\}$, given by the columns
\begin{equation} \label{eq:CLSExampleVecsGeneralizedMultiplet}
\left[
\begin{array}{ccc}
-\frac{2}{\sqrt{10}} & \frac{1}{2 \sqrt{5}} & -\frac{1}{2 \sqrt{5}} \\
-\frac{1}{\sqrt{10}} & -\frac{1}{\sqrt{5}} & \frac{1}{\sqrt{5}} \\
-\frac{1}{\sqrt{10}} & \frac{1}{20} \left(5+\sqrt{5}\right) & \frac{1}{20} \left(5-\sqrt{5}\right) \\
\frac{1}{\sqrt{10}} & \frac{1}{20} \left(5-\sqrt{5}\right) & \frac{1}{20} \left(5+\sqrt{5}\right) \\
0 & -\frac{1}{2} & -\frac{1}{2} \\
\frac{1}{\sqrt{10}} & \frac{1}{\sqrt{5}} & -\frac{1}{\sqrt{5}} \\
\frac{1}{\sqrt{10}} & \frac{1}{20} \left(-5-\sqrt{5}\right) & \frac{1}{20} \left(\sqrt{5}-5\right) \\
\frac{1}{\sqrt{10}} & \frac{1}{20} \left(5-\sqrt{5}\right) & \frac{1}{20} \left(5+\sqrt{5}\right) \\
\end{array}
\right].
\nonumber
\end{equation}
As an example, we apply \cref{thm:multipletZeroSum} for the even walk quadruplet $\{(1)_{a}, (3)_{b}, (4)_{2 a + b}, (5)_{a + b}\}^{+}$ (shown in the table of \cref{fig:exampleNonuniformMultiplets}) relative to $\{2,6\}$.
By \cref{eq:multipletAmplitudes}, each of the above eigenvectors fulfills
\begin{equation}
a  \phi^{{\nu}}_{1} + b  \phi^{{\nu}}_{3} + (2a + b)  \phi^{{\nu}}_{4} + (a+b)  \phi^{{\nu}}_{5} = 0; 
\quad
\phi^{{\nu}}_{2} = -\phi^{{\nu}}_{6} \neq 0,
\quad
\nu = 1,2,3,
\end{equation}
for any values of the parameters $a,b$, as the reader may easily verify.
Note that the above eigenvectors also have local odd parity on $\{3,7\}$.
This is again a result of \cref{eq:multipletAmplitudes}, since $\{(3,7)_a\}^+$ is an even walk doublet relative to the cospectral pair $\{2,6\}$, so that $\phi^{{\nu}}_{3} + \phi^{{\nu}}_{7} = 0$ for $\nu = 1,2,3$.
\end{example}

For uniform walk multiplets, and especially singlets, \cref{thm:multipletZeroSum} simplifies:
If an even (odd) walk multiplet $\mul_\gamma^p$ relative to $u,v$ is uniform ($\gamma_m = \textrm{const.}$), then $\sum_{m \in \mul} \phi_m = 0$ for any eigenvector $\phi$ with odd (even) parity on $u,v$; 
in particular, $\phi$ has zero component on any even (odd) walk singlet.
The zero component of an eigenvector $\phi$ on a vertex $c$ can be understood as a cancellation of weighted eigenvector components in the eigenvalue equation $H \phi = \lambda \phi$, written as $\sum_{m \neq c} H_{cm} \phi_m = (\lambda - H_{cc}) \phi_c$.
If $\phi_c = 0$, the sum over components $\phi_m$ on vertices $m$ adjacent (i.\,e. connected by edges) to $c$, weighted by the corresponding edge weights, vanishes, i.\,e. $\sum_{m \neq c} H_{cm} \phi_m = 0$.
This coincides, though, with \cref{eq:multipletAmplitudes} of \cref{thm:multipletZeroSum} for $H_{cm} = \gamma_m$.
Further, recall that the components of eigenvectors with parity $p$ on cospectral vertices vanish on walk singlets with opposite parity $-p$.
In the light of \cref{thm:multipletExtension}\,(ii) and \cref{cor:singletGraphExtension}, this suggests that walk multiplets may be used to construct graphs having eigenvectors with multiple vanishing components, namely on graph extensions consisting only of walk singlets.
We demonstrate this in the following example.

\begin{figure*} [t] 
\centering
\includegraphics[max size={0.8\textwidth}]{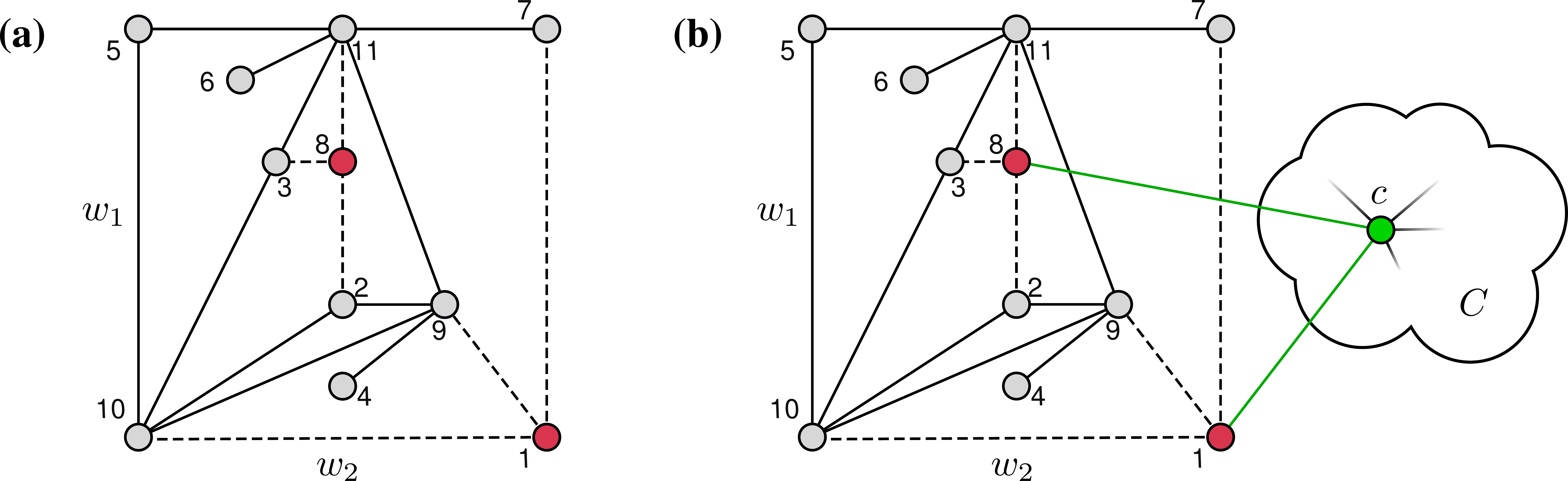}
\caption{
\textbf{(a)} 
A graph with no walk singlets relative to the only cospectral pair $\{1,8\}$ which remains cospectral for any nonzero edge weights $w_1$ (solid lines) and $w_2$ (dashed lines).
The graph has seven eigenvectors with odd local parity (and nonzero components) on $\{1,8\}$. 
\textbf{(b)} 
When connecting an arbitrary graph $C$ symmetrically via a single vertex $c$ to the cospectral pair $\{1,8\}$, which is also a uniform even walk doublet $\{1,8\}^{+}$, then by \cref{cor:singletGraphExtension} all vertices within $C$ are walk singlets relative to $\{1,8\}$.
The original odd party eigenvectors vanish on all vertices of $C$.
}
\label{fig:singletConstruction}
\end{figure*}

\begin{example}
\normalfont
We start with the graph in \cref{fig:singletConstruction} (a), which has no walk singlets (or any other walk multiplets up to size $5$, for that matter) relative to its cospectral pair $\{1,8\}$.
The cospectrality of $\{1,8\}$ is independent of the values of the weights $w_1$ and $w_2$ (indicated by solid and dashed lines), as long as they are nonzero.
We can now easily create singlets by symmetrically connecting a new graph $C$, depicted by a cloud in \cref{fig:singletConstruction} (b), to the two cospectral vertices $1,8$ via a single vertex $c$ of $C$.
This is ensured by \cref{cor:singletGraphExtension,thm:multipletInterconnection}, with the cospectral pair here simultaneously representing a walk doublet (see \cref{rem:cospPairDoublet}).
The original graph in \cref{fig:singletConstruction}\,(a) has seven eigenvectors with odd parity on $\{1,8\}$ for any choice of the edge weights $w_1, w_2 \ne 0$.
We note that this number can be deduced by applying the methodology of Ref.\,\cite{Rontgen2020_PRA_101_042304_DesigningPrettyGoodState}, wherein the so-called ``isospectral reduction'' is used to split the graph's characteristic polynomial into smaller pieces, the orders of which are linked to the number of positive and negative parity eigenvectors.
Coming back to the example, we note that each of those seven odd parity eigenvectors has vanishing components on all vertices of $C$ by \cref{cor:singletGraphExtension,thm:multipletZeroSum}.
Of course, depending on the internal structure of the subgraph $C$, the total graph may now feature further eigenvectors (not those seven from above) which have zero components on different subgraphs (not $C$).
\end{example}

When the subgraph $C$ is much larger than the original graph of \cref{fig:singletConstruction} (a), most of the eigenvector components of the seven odd parity eigenvectors vanish.
Eigenvectors with such a property are known as ``sparse eigenvectors'' \cite{Benidis2016_ITSP_64_6211_OrthogonalSparsePCACovariance,Teke2017_ITSP_65_5406_UncertaintyPrinciplesSparseEigenvectors} in engineering or computer science.
Such eigenvectors can also be characterized as ``compact'', since they have nonzero components only on a strict subset of the vertex set of a graph $H$.
Indeed, if $\ham$ represents a Hamiltonian of a physical system composed of discrete sites (like the atoms in molecular model of \cref{fig:exampleUSP}), then eigenstates of $\ham$ which are strictly confined to a subset of sites are often referred to as ``compact localized states'' \cite{Rontgen2018_PRB_97_035161_CompactLocalizedStatesFlat,Maimaiti2019_PRB_99_125129_Universald1FlatBand} or even ``dark states'' \cite{PembertonRoss2010_PRA_82_042322_QuantumControlTheoryState,Le2018_JPAMT_51_365306_HowSuppressDarkStates} depending on the context.
We have here demonstrated how such compact eigenvectors can be generated for a graph featuring cospectral vertices, by extending the graph via walk multiplets.
As a perspective for future work, this may be used to design discrete physical setups with compact localized states or, more generally, network systems with some eigenvectors vanishing on desired nodes.

\section{Generating cospectral vertices without permutation symmetry from highly symmetric graphs} 
\label{sec:generatingCospectralVerticesFromSymmetry}

Until now, the existence of cospectral vertices has been assumed to be given, and we now come to the question of how to generate such graphs.
One possible method is to start from two graphs $G_{1},G_{2}$ with the same characteristic polynomial (such graphs can be constructed by means of the so-called ``Godsil-McKay-switching'' from Ref.\,\cite{Godsil1982AM25257ConstructingCospectralGraphs}), and then search for a graph $\ham$ such that $\ham \setminus u=G_{1}$ and $\ham \setminus v=G_{2}$.
The two vertices $u$ and $v$ are then guaranteed to be cospectral in $\ham$.

The concept of walk multiplets, as introduced in this work, naturally suggests another scheme for generating graphs with cospectral vertices.
Starting from a matrix $\ham$ which commutes with a permutation matrix $P$ which exchanges $u$ and $v$ (with arbitrary permutations of the remaining
vertices, so that other vertices could be symmetry-related as well), one first identifies the walk multiplets of $\ham$ relative to $\{u,v\}$.
In a second step, $\ham$ is changed by either (i) connecting one or more new vertices to (some of) the multiplets having common parity, following \cref{thm:multipletExtension}, or (ii) interconnecting multiplets by adding edge weights between them, following \cref{thm:multipletInterconnection}.
Vertices $u$ and $v$ remain cospectral under these operations, but the resulting matrix $\ham'$ may feature less permutation symmetries than $\ham$.
Interestingly, $\ham'$ could feature \emph{no permutation symmetry at all}, as we demonstrate in the following examples.

\begin{example} \label{ex:cospectralLadder}
\normalfont
\Cref{fig:cospectralLadder} (a) shows a ``ladder'' graph with two legs and three rungs.
As drawn here, it is symmetric both under a reflection about the horizontal and the vertical axis.
As a result of the symmetry about the vertical axis, and among other cospectral pairs, the two central vertices $u,v$ are cospectral.
Moreover, as a result of the combined horizontal and vertical reflection symmetry, the two pairs $\{d_{1},d_{2}\}$ and $\{d_{1},d_{3}\}$ correspond to even uniform walk doublets relative to $\{u,v\}$.
In \cref{fig:cospectralLadder}\,(b), a new vertex $c$ is connected to $\{d_{1},d_{2}\}$ and another new vertex $c'$ is connected to $\{d_{1},d_{3}\}$, with some arbitrary but uniform weights $a$ and $b$, respectively. 
The extension by $c$ and $c'$ breaks the previous reflection symmetries in the resulting graph, which in fact features no permutation symmetries at all. 
By \cref{thm:multipletExtension}, however, the vertices $u,v$ remain cospectral.
Note, in particular, that the occurrence of the walk doublet $\{d_{1}, d_{3}\}$ in \cref{fig:cospectralLadder}\,(a) can be intuitively explained by the graph's combined reflection symmetry about its vertical and horizontal axes.
The symmetry about the horizontal axis is then broken when first adding vertex $c$, so one might intuitively  expect also the walk multiplet condition for $\{d_{1}, d_{3}\}$ to be violated.
Nevertheless, \cref{thm:multipletExtension} guarantees that $\{d_{1}, d_{3}\}$ remains a walk multiplet relative to $\{u,v\}$, and so $c'$ can be further added without breaking cospectrality.
An alternative way to generate a graph with cospectral vertices and no permutation symmetries is shown in \cref{fig:cospectralLadder}\,(c).
Here the same original graph is modified by applying \cref{thm:multipletInterconnection}:
Instead of connecting the two walk doublets $\{d_{1},d_{2}\}^+$ and $\{d_{1},d_{3}\}^+$ to added vertices, they are now interconnected to each other.
Specifically, the weights $ab$ are added pairwise to the edges between $d_1, d_2, d_3$, and a loop of weight $2ab$ is added to the overlap $d_1 = \{d_{1},d_{2}\} \cap \{d_{1},d_{3}\}$ , with $a$ and $b$ being arbitrary parameters.
Note that, while the vertex set of the graph remains the same, its topology has now changed by the added edges $(d_1,d_3)$ and $(d_2,d_3)$.
Again, the pair $\{u,v\}$ remains cospectral, while the resulting graph has no permutation symmetry.
\end{example}

\begin{figure*} [t] 
\centering
\includegraphics[max size={0.7\textwidth}]{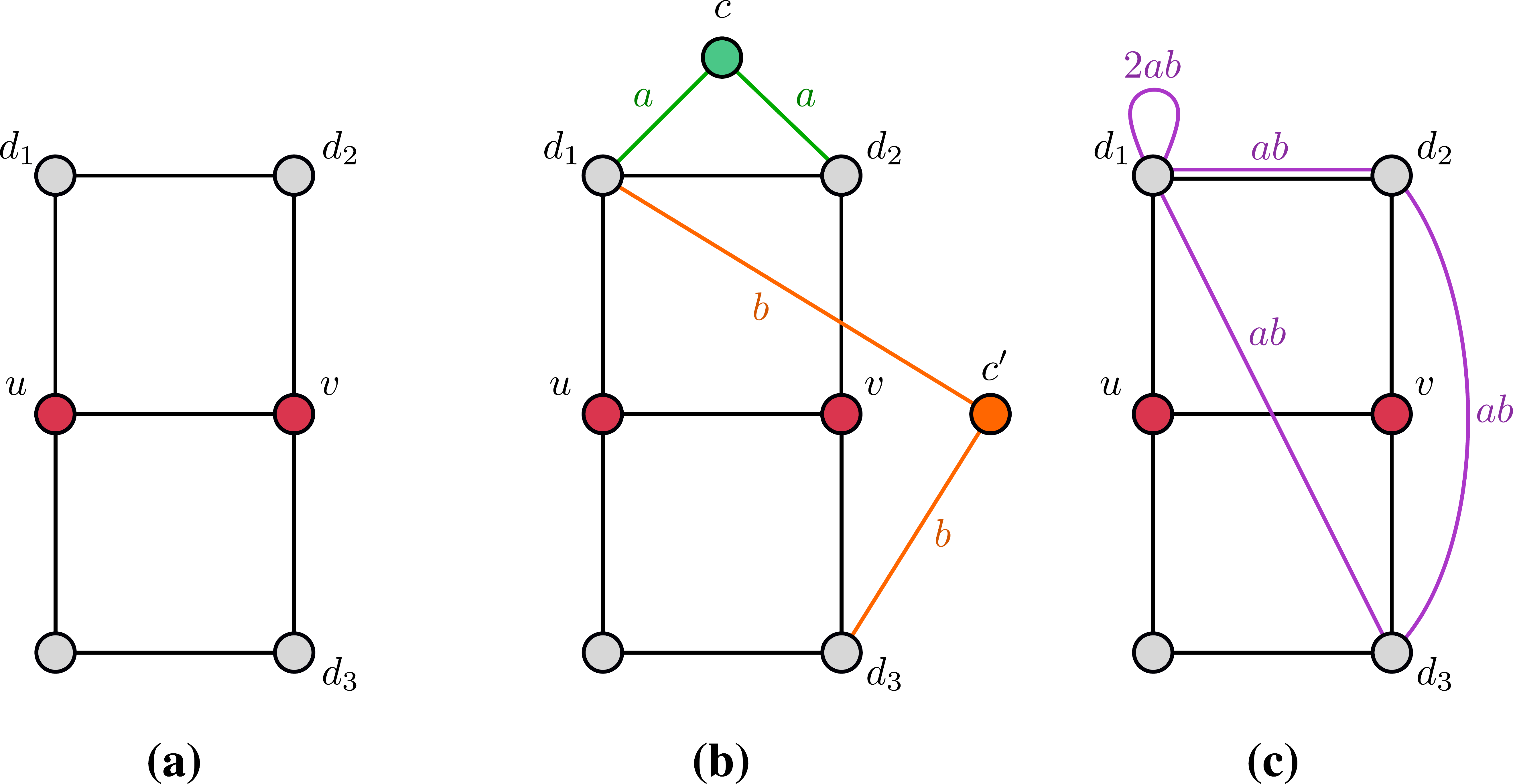}
\caption{
Generation of a weighted graph with cospectral vertices and without any permutation symmetry; see \cref{ex:cospectralLadder}.
\textbf{(a)} A ``ladder'' graph, reflection symmetric about its vertical and horizontal axes, with cospectral vertices $u$ and $v$ (red), is modified by 
\textbf{(b)} connecting two new vertices $c$ (green) and $c'$ (orange) to the uniform walk doublets $\{d_{1},d_{2}\}^+$ and $\{d_{1},d_{3}\}^+$ relative to $\{u,v\}$ with weights $a$ and $b$, respectively, or 
\textbf{(c)} interconnecting those walk doublets by adding edge weights as shown.
In both (a) and (b), the resulting graph has no permutation symmetries, while $u,v$ remain cospectral, as ensured by \cref{thm:multipletExtension,thm:multipletInterconnection}, respectively.
}
\label{fig:cospectralLadder}
\end{figure*}

\begin{example} \label{ex:spaceships}
\normalfont
\Cref{fig:spaceships} (a) shows again a graph which, as visualized, is vertically and horizontally reflection symmetric and has (among others) two cospectral vertices $u,v$ and two uniform even walk doublets $\{d_{1},d_{2}\}^+$ and $\{d_{1},d_{3}\}^+$ relative to $\{u,v\}$.
We now use \cref{thm:multipletInterconnection} to change the topology of the original graph and subsequently \cref{thm:multipletExtension} to further extend it by new vertices, with $u,v$ remaining cospectral in the final graph where all permutation symmetries are broken.
Specifically, in \cref{fig:spaceships}\,(b) we interconnect the walk doublet $\{d_{1},d_{3}\}^+$ to the doublet $\{u,v\}^+$ by uniformly adding edge weights between their vertices (creating new edges if absent) according to \cref{thm:multipletInterconnection}.
In \cref{fig:spaceships}\,(c) we proceed by connecting a new vertex $c$ to the doublet $\{d_{1},d_{2}\}^+$ and another new vertex $c'$ first to $\{d_{1},d_{3}\}^+$ and then to $\{u,v\}^+$ (equivalent to connecting $c'$ directly to the walk quadruplet $\{(d_1,d_3)_a,(u,v)_b\}^+$), following \cref{thm:multipletExtension}.
We finally also disconnect $u$ from $v$, which leaves them cospectral according to \cref{rem:cospVertInterconnection}.
\end{example}

The highly symmetric base graphs in \cref{ex:cospectralLadder,ex:spaceships} were chosen unweighted and without loops for simplicity.
Notably, they could easily be enriched by adding loops on their vertices and weighting the edges such that the indicated cospectral pairs $\{u,v\}$ are still present (that is, by respecting the reflection symmetries about the vertical and/or horizontal axes).
Then, the extensions and interconnections described above could still be performed, creating weighted graphs featuring cospectral pairs without permutation symmetries.

\section{Conclusions} \label{sec:conclusions}

Cospectral vertices offer the exciting possibility of eigenvectors of a matrix $\ham$ having local parity on components corresponding to cospectral vertex pairs, even without the existence of corresponding permutation matrices commuting with $\ham$.
Here, we introduced the notion of ``walk equivalence'' of two cospectral vertices with respect to a vertex subset of a graph represented by a matrix $\ham$.
Such subsets, corresponding to what we call ``walk multiplets'', provide a simple and generally applicable method of modifying a given graph with cospectral vertices such that the cospectrality is preserved.
The definition of walk multiplets is based on the entries of the powers of $\ham$ and can be expressed in terms of so-called walk matrices used in graph theory.
As we demonstrate here, the concept of walk multiplets generalizes that of ``unrestricted substitution points'' (USPs), introduced for molecular graphs, to vertex subsets of arbitrary size:
Any arbitrary new graph can be connected, via one of its vertices, to all vertices of a walk multiplet relative to a cospectral pair in an existing graph, without breaking the cospectrality.
In fact, USPs turn out to coincide with walk ``singlets'', that is, multiplets comprised of a single vertex.
We further showed how walk multiplets can be used to derive sets of local relations between the components of an eigenvector with certain parity on a given associated cospectral pair.
As a special case, the eigenvector components then vanish on any walk singlet as well as on any graph connected exclusively to walk singlets.
This relates to the generation of so-called ``compact localized states'' in artificial physical setups, also known as ``sparse eigenvectors'' in other areas of science.
We also presented a scheme in which we use walk multiplets to construct a class of graphs having cospectral vertices without any permutation symmetries.

It is important to notice that the analysis performed here applies also to more than two cospectral vertices:
For any subset $\mathbb{S}$ of cospectral vertices, cospectrality is indeed defined \emph{pairwise} for any two vertices $u,v \in \mathbb{S}$, and thus the walk multiplet framework applies to any such pair. 
Our results may thus offer a valuable resource in understanding and manipulating the structure of eigenvectors in an engineered network system via its walk multiplets---that is, by only utilizing the powers of the underlying matrix.
In particular, the local eigenvector component relations derived here may be systematically exploited to deduce parametric forms of eigenvectors for generic graphs with cospectral pairs; 
an investigation left for future work.

Let us finally also hint at a possible connection to recent studies of local symmetries in discrete quantum models, which provide relations between the components of general states in the form of non-local continuity equations \cite{Morfonios2017AP385623NonlocalDiscreteContinuityInvariant,Rontgen2017AP380135NonlocalCurrentsStructureEigenstates} and may offer advantages for state transfer on quantum networks \cite{Rontgen2020_PRA_101_042304_DesigningPrettyGoodState}.
In this context, it would be intriguing to explore the possible implications of walk multiplets for the dynamical evolution of wave excitations on general network-like systems.

\begin{figure*} [t] 
\centering
\includegraphics[max size={1\textwidth}]{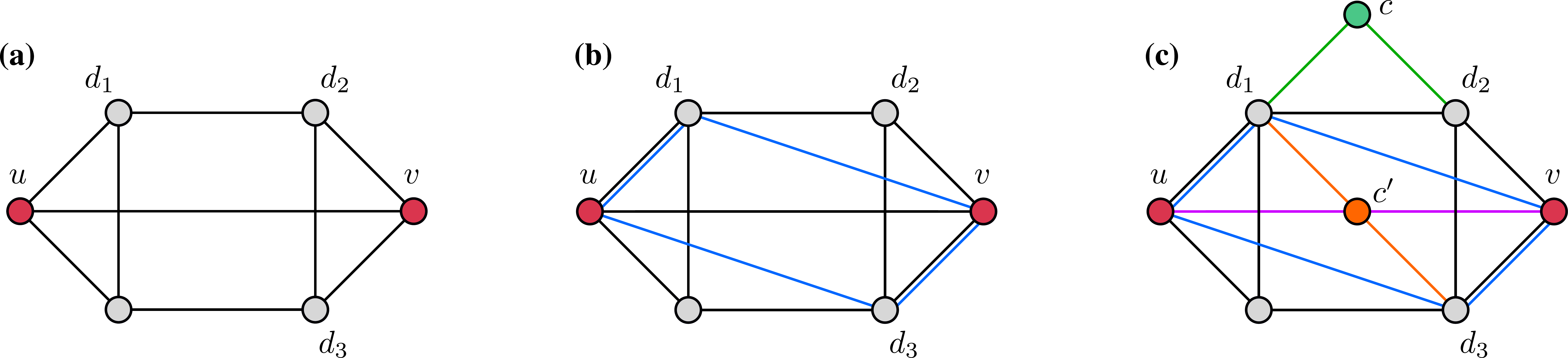}
\caption{
The highly symmetric graph in \textbf{(a)} with (among other pairs) the cospectral vertex pair $\{u,v\}$  is modified by \textbf{(b)} interconnecting the walk doublets $\{u,v\}^+$ and $\{d_1,d_3\}^+$ (according to \cref{thm:multipletInterconnection}), and then \textbf{(c)} disconnecting the two cospectral vertices $u,v$ from each other (\cref{rem:cospVertInterconnection}), connecting a new vertex $c$ to the walk doublet $\{d_1,d_2\}^+$, and another new vertex $c'$ to the walk quadruplet $\{(d_1,d_3)_a,(u,v)_b\}^+$ (\cref{thm:multipletExtension}), resulting in a graph with cospectral vertices $u,v$ but no permutation symmetries; 
see \cref{ex:spaceships}.
}
\label{fig:spaceships}
\end{figure*}

\section{Acknowledgements}

M.\,R. is thankful to the `Stiftung der deutschen Wirtschaft' and M.\,P. is thankful to the `Studienstiftung des deutschen Volkes' for financial support in the framework of scholarships.

\appendix

\section{Proofs of theorems} \label{app:Proofs}

We here restate \cref{thm:multipletExtension,thm:multipletInterconnection,thm:multipletZeroSum,thm:uspSetsAreMultiplets} together with their proofs.

\multipletExtension*
\begin{proof}
We partition any walk of length $k$ in $\ham$ into the walks restricted exclusively to $\hamz$ and the additionally generated walks in $\ham$ visiting the new vertex $c$. 
Then we apply the multiplet condition, \cref{eq:multipletDef}, which is valid in the old graph $\hamz$.
For convenience, we define 
\begin{equation} \label{eq:multipletShortcut}
\bmulz{\ell}{s}{\mul} \equiv \sum_{m \in \mul} \gamma_m [\hamz^\ell]_{sm}, \quad
\bmul{\ell}{s}{\mul} \equiv \sum_{m \in \mul} \gamma_m [\ham^\ell]_{sm}.
\end{equation}
Then, since $\mul_\gamma^p$ is a walk multiplet relative to $u,v$ in $\hamz$, we have from \cref{eq:multipletDef} that
\begin{equation} \label{eq:multipletConditionOld}
 \bmulz{k}{u}{\mul} = p \, \bmulz{k}{v}{\mul} \quad \forall k \in \mb{N}, \quad p \in \{+1,-1\},
\end{equation}
To prove (i), we compute, with walk lengths fulfilling $\ell + n + r = k - 2$,
\begin{align}
\comp{H^{k}}{u}{u} 
= & 
\comp{\hamz^{k}}{u}{u} + 
\sum_{\ell, n, r} \; \sum_{m,m' \in \mul} \; \comp{\hamz^{\ell}}{u}{m} H_{m,c} \comp{H^{n}}{c}{c} H_{c,m'} \comp{\hamz^{r}}{m'}{u} \\
= & 
\comp{\hamz^{k}}{u}{u} + 
\sum_{\ell, n, r} \; \sum_{m,m' \in \mul} \; \comp{\hamz^{\ell}}{u}{m} \gamma_m \comp{H^{n}}{c}{c} \gamma_{m'} \comp{\hamz^{r}}{m'}{u} \\
= & 
\comp{\hamz^{k}}{u}{u} + 
\sum_{\ell, n, r} \; \bmulz{\ell}{u}{\mul}  \comp{H^{n}}{c}{c} \bmulz{r}{u}{\mul} \\
= & 
\comp{\hamz^{k}}{v}{v} + 
p^2 \sum_{\ell, n, r} \; \bmulz{\ell}{v}{\mul} \comp{H^{n}}{c}{c} \bmulz{r}{v}{\mul} \\
= & \comp{\ham^{k}}{v}{v},
\end{align}
where we used $p^2 = 1$, \cref{eq:multipletConditionOld}, and the cospectrality of $u,v$ in $\hamz$.
To prove (ii), we compute, now with $\ell + n = k - 1$,
\begin{align}
\comp{\ham^{k}}{u}{c} 
=& 
\sum_{\ell,n}  \; 
\sum_{m \in \mul} \comp{\hamz^{\ell}}{u}{m} H_{m,c} \comp{\ham^{n}}{c}{c}
= 
\sum_{\ell,n}  \; 
\sum_{m \in \mul} \gamma_m \comp{\hamz^{\ell}}{u}{m} \comp{\ham^{n}}{c}{c} \label{eq:singletStep1} \\
=& 
\sum_{\ell,n}  \; 
\bmulz{\ell}{u}{\mul} \comp{\ham^{n}}{c}{c}
\label{eq:singletStep2} \\
=& 
\sum_{\ell,n}  \; 
p \bmulz{\ell}{v}{\mul} \comp{\ham^{n}}{c}{c}
=
p\,\comp{\ham^{k}}{v}{c} 
\label{eq:singletStep3}
\end{align}
To prove (iii), we compute, again with $\ell + n = k - 1$,
\begin{align}
\comp{\ham^{k}}{u}{m} 
=& \comp{\hamz^{k}}{u}{m} + 
\sum_{\ell, n} \; 
\sum_{m' \in \mul} \comp{\hamz^{\ell}}{u}{m'} H_{m',c} \comp{\ham^{n}}{c}{m} \\
=& \comp{\hamz^{k}}{u}{m} + 
\sum_{\ell, n} \; 
\bmulz{\ell}{u}{\mul} \comp{\ham^{n}}{c}{m}
\end{align}
so that, multiplying by $\gamma_m$ and summing over $m \in \mul$, we have
\begin{align} 
\bmul{k}{u}{\mul}
= & \,
\bmulz{k}{u}{\mul} + 
\sum_{m \in \mul} \gamma_m \sum_{\ell, n} \; 
\bmulz{\ell}{u}{\mul} \comp{\ham^{n}}{c}{m} \\
= & \,
p\,\bmulz{k}{v}{\mul} + 
\sum_{m \in \mul} \gamma_m \sum_{\ell, n} \; 
p\bmulz{\ell}{v}{\mul}  \comp{\ham^{n}}{c}{m} \\
= & \,
p\sum_{m \in \mul} \gamma_m 
\left(
\comp{\hamz^{k}}{v}{m}
\right.
+ 
\sum_{\ell, n} \; 
\left.
\bmulz{\ell}{v}{\mul} \comp{\ham^{n}}{c}{m} 
\right)
= 
p\sum_{m \in \mul} \gamma_m \comp{\ham^{k}}{v}{m}
= p\,\bmul{k}{v}{\mul} 
\end{align}	

Note that, if any arbitrary graph $C$ is connected exclusively to the added vertex $c$, then any vertex $c'$ of $C$ is also a walk singlet relative to $\{u,v\}$.
Indeed, simply replacing $[{H}^n]_{cc}$ with $[{H}^n]_{cc'}$ in \cref{eq:singletStep1,eq:singletStep2,eq:singletStep3} above leads to $\comp{\ham^{k}}{u}{c'} = p\,\comp{\ham^{k}}{v}{c'}$, which proves \cref{cor:singletGraphExtension}.

\end{proof}

\multipletInterconnection*

\begin{proof}
To prove that the vertices $u,v$ remain cospectral in the modified graph $H$ with added edge weights $H_{x,y} - \hamz_{x,y}$ as described in the theorem, we will partition the walks in the new graph into walk segments such that the multiplet relations, \cref{eq:multipletWalkMatrix}, can be applied for the segments within the old graph $\hamz$.

We first express the newly generated closed walks from $u$ using (i) walks segments in the old graph $\hamz$ to reach a vertex of one of the multiplets $\mathbb{X}$ (or $\mathbb{Y}$), (ii) the new edge (that is, the weight added if the edge already existed) to cross to the other multiplet $\mathbb{Y}$ (or $\mathbb{X}$), and (iii) finally coming back to $u$ using walks in the new graph $H$.

Defining 
$\mathbb{M}= \mathbb{X} \backslash \mathbb{Z}$, 
$\mathbb{W}= \mathbb{Y} \backslash \mathbb{Z}$, and with added edge weights $H_{ij} - \hamz_{ij} = \gamma_{i}\delta_{j}$ (resp. $\gamma_{i}\delta_{j}+\gamma_{j}\delta_{i}$) if $i,j \in \mathbb{X} \cup \mathbb{Y} \wedge (i \notin \mathbb{Z} \vee j \notin \mathbb{Z})$ (resp. $i,j \in \mathbb{Z}$), we have
\begin{align}
\comp{\ham^k}{u}{u} = [\hamz^k]_{u,u} & + \nonumber \\
\sum_{l+n+1=k} \big\{ 
&  \sum_{m \in \mathbb{M}, w \in \mathbb{W} } \comp{\hamz^{l}}{u}{m} \gamma_m \delta_w \comp{\ham^{n}}{w}{u}
+
\sum_{z \in \mathbb{Z}, w \in \mathbb{W} } \comp{\hamz^l}{u}{z} \gamma_z \delta_w \comp{\ham^{n}}{w}{u} + \label{eqn1} \\
&  \sum_{w \in \mathbb{W}, m \in \mathbb{M} } \comp{\hamz^l}{u}{w} \delta_w \gamma_m \comp{\ham^n}{m}{u} 
+
\sum_{z \in \mathbb{Z}, m \in \mathbb{M} } \comp{\hamz^l}{u}{z} \delta_z \gamma_m \comp{\ham^n}{m}{u} + \label{eqn2} \\
&  \sum_{m \in \mathbb{M}, z \in \mathbb{Z} } \comp{\hamz^l}{u}{m} \gamma_m \delta_z \comp{\ham^n}{z}{u} + \label{eqn3} \\
&  \sum_{w \in \mathbb{W}, z \in \mathbb{Z} } \comp{\hamz^l}{u}{w} \delta_w \gamma_z \comp{\ham^n}{z}{u}  + \label{eqn4} \\
&  \sum_{z' \in \mathbb{Z}, z \in \mathbb{Z} } \comp{\hamz^l}{u}{z'} (\gamma_{z'} \delta_z + \gamma_z \delta_{z'}) \comp{\ham^n}{z}{u} ~ \big\}. \label{eqn5}
\end{align}
We can now combine sums over subsets as follows:
$\sum_{m \in \mathbb{M}} + \sum_{z \in \mathbb{Z}} = \sum_{x \in \mathbb{X}}$ 
in (\ref{eqn1}), and the same in (\ref{eqn3}) with the first term ($\gamma_{z'} \delta_z$) in (\ref{eqn5}).
Similarly,
$\sum_{w \in \mathbb{W}} + \sum_{z \in \mathbb{Z}} = \sum_{y \in \mathbb{Y}}$ 
in (\ref{eqn2}), and the same in (\ref{eqn4}) with the second term ($\gamma_z \delta_{z'}$) in (\ref{eqn5}).
This yields
\begin{align}
\comp{\ham^k}{u}{u} 
=  [\hamz^k]_{u,u} 
 + 
\sum_{l+n+1=k} \big\{ 
& \sum_{x \in \mathbb{X}, w \in \mathbb{W} } \comp{\hamz^l}{u}{x} \gamma_x \delta_w \comp{\ham^{n}}{w}{u}
+ 
\sum_{y \in \mathbb{Y}, m \in \mathbb{M} } \comp{\hamz^l}{u}{y} \delta_y \gamma_m \comp{\ham^n}{m}{u} + \nonumber \\
&  \sum_{x \in \mathbb{X}, z \in \mathbb{Z} } \comp{\hamz^l}{u}{x} \gamma_x \delta_z \comp{\ham^n}{z}{u} 
+ 
\sum_{y \in \mathbb{Y}, z \in \mathbb{Z} } \comp{\hamz^l}{u}{y} \delta_y \gamma_z \comp{\ham^n}{z}{u} ~ \big\} \nonumber \\
=  [\hamz^k]_{u,u} 
 +
\sum_{l+n+1=k} \big\{ 
& \sum_{x \in \mathbb{X}, y \in \mathbb{Y} } \comp{\hamz^l}{u}{x} \gamma_x \delta_y \comp{\ham^{n}}{y}{u}
+
\sum_{y \in \mathbb{Y}, x \in \mathbb{X} } \comp{\hamz^l}{u}{y} \delta_y \gamma_x \comp{\ham^{n}}{x}{u} ~ \big\}. \label{eq:proofHalf} 
\end{align}
Next, we account for the walk segments from vertex $i = x,y$ back to $u$, which have a similar form:
\begin{align}
\comp{\ham^{n}}{i}{u} = \comp{\hamz^n}{i}{u} + 
\sum_{r+s+1=n} \big\{ 
\sum_{x' \in \mathbb{X}, y' \in \mathbb{Y} } \comp{\ham^{r}}{i}{x'} \gamma_{x'} \delta_{y'} \comp{\hamz^s}{y'}{u}
+ 
\sum_{y' \in \mathbb{Y}, x' \in \mathbb{X} } \comp{\ham^{r}}{i}{y'} \delta_{y'} \gamma_{x'} \comp{\hamz^s}{x'}{u} \big\}. \label{eq:backwalks}
\end{align}
Plugging this into (\ref{eq:proofHalf}), after some sorting and combining of terms we arrive at (with $x,x' \in \mb{X}$ and $y,y' \in \mb{Y}$)
\begin{align}
\comp{\ham^k}{u}{u} = [\hamz^k]_{u,u} + 
 2 \;  \sum_{l + n + 1 = k} \quad  \sum_{x, y} \quad
&\comp{\hamz^l}{u}{x} \gamma_x \delta_y \comp{\hamz^n}{y}{u} + \nonumber \\
  \sum_{l + r + s + 2 = k} \;  \sum_{x,x',y,y'} 
\big\{
&\comp{\hamz^l}{u}{x} \gamma_x \delta_y
\comp{\ham^{r}}{y}{y'} \delta_{y'} \gamma_{x'} \comp{\hamz^s}{x'}{u} + \nonumber \\
&\comp{\hamz^l}{u}{y} \delta_y \gamma_x
\comp{\ham^{r}}{x}{x'} \gamma_{x'} \delta_{y'} \comp{\hamz^s}{y'}{u} + \nonumber \\
& 2 \comp{\hamz^l}{u}{x} \gamma_x \delta_y
\comp{\ham^{r}}{y}{x'} \gamma_{x'} \delta_{y'} \comp{\hamz^s}{y'}{u}
\big\} 
\end{align}
It is now evident that $\comp{\ham^{k}}{u}{u}$ is equal to $\comp{\ham^{k}}{v}{v}$ by applying cospectrality of $u,v$ in $\hamz$ and multiplet conditions for $\mathbb{X}^p_{\gamma}$, $\mathbb{Y}^p_{\delta}$.

To prove that any general non-uniform walk multiplet $\mathbb{Q}^p_{\epsilon}$ (with weight tuple $\epsilon$ and of the same parity $p$ as $\mathbb{X}^p_{\gamma}$, $\mathbb{Y}^p_{\delta}$) in $\hamz$ is preserved in $\ham$, we evaluate the following expression by using \cref{eq:backwalks}:
\begin{align}
\sum_{q \in \mathbb{Q}} \epsilon_q \comp{\ham^{k}}{u}{q} 
=  
\sum_{q \in \mathbb{Q}} \epsilon_q 
\big\{
[\hamz^k]_{u,q} 
 & + \sum_{r+s+1=k} \; \sum_{x \in \mathbb{X}, y \in \mathbb{Y} } \comp{\hamz^s}{u}{x} \gamma_{x} \delta_{y} \comp{\ham^{r}}{y}{q} \nonumber \\
 & + \sum_{r+s+1=k} \; \sum_{y \in \mathbb{Y}, x \in \mathbb{X} } \comp{\hamz^s}{u}{y} \delta_{y} \gamma_{x} \comp{\ham^{r}}{x}{q}
\big\} \\
 = p \sum_{q \in \mathbb{Q}} \epsilon_q \comp{\ham^{k}}{v}{q},
\end{align}
where in the last step we applied the multiplet conditions for $\mathbb{Q}^p_{\epsilon}$, $\mathbb{X}^p_{\gamma}$, $\mathbb{Y}^p_{\delta}$.
\end{proof}

\uspSetsAreMultiplets*
\begin{proof}
We start with part (i) of the theorem.
If $\mul_\gamma^p$ is a walk multiplet, then cospectrality of $\{u,v\}$ is preserved by \cref{thm:multipletExtension}.
For the converse, we assume that $\{u,v\}$ remain cospectral, that is
\begin{equation} \label{eq:cospectralityNew}
 [{H}^k]_{u,u} = [{H}^k]_{v,v} \quad \forall\, k \in \mb{N},
\end{equation}
where, with $\bmulz{\ell}{s}{\mul}$ defined as in \cref{eq:multipletShortcut},
\begin{align} \label{eq:aux}
 [{H}^k]_{s,s} 
 & = [\hamz^k]_{s,s} +
 \sum_{n}
 \sum_{\ell,\ell'}
 \sum_{m,m' \in \mul}
 \comp{\hamz^\ell}{s}{m} \gamma_m \comp{{H}^n}{c}{c} \gamma_{m'}\comp{\hamz^{\ell'}}{m'}{s} \\ \label{eq:Hpowerk}
 & =
 [\hamz^k]_{s,s} +
 \sum_{n}
 \sum_{\ell,\ell'}
 \bmulz{\ell}{s}{\mul} \comp{{H}^n}{c}{c} \bmulz{\ell'}{s}{\mul}
\end{align}
with $s \in \{u,v\}$, $\ell,\ell' \geqslant 0$, $n \geqslant 0$, and $\ell + \ell' = k - n - 2$.
We further define
\begin{equation} \label{eq:shorthands}
\diffb_{\ell,\ell'} \equiv 
\bmulz{\ell}{u}{\mul} \bmulz{\ell'}{u}{\mul} 
- 
\bmulz{\ell}{v}{\mul} \bmulz{\ell'}{v}{\mul} 
= 
\diffb_{\ell',\ell}, \quad
a_n^{(k)} \equiv \sum_{\substack{\ell + \ell' = k - n - 2 \\ \ell,\ell' \geqslant 0}} \diffb_{\ell,\ell'}.
\end{equation}
Using $\comp{\hamz^k}{u}{u} = \comp{\hamz^k}{v}{v}$ and substituting the decomposition from \cref{eq:Hpowerk} into \cref{eq:cospectralityNew} for $s = u,v$ we arrive at 
\begin{equation} \label{eq:cospCondition}
 \comp{{\ham}^k}{u}{u} - \comp{{\ham}^k}{v}{v} = \sum_{n = 0}^{k-2} a_n^{(k)} [{H}^n]_{cc} = 0 \quad \forall \,k \in \mb{N}.
\end{equation}
To prove that $\mul_\gamma^p$ is a multiplet, we must show that (dropping the subscript $\mul$)
\begin{equation} \label{eq:multipletCondition}
 \bz^{(\ell)}_{u}  = p \, \bz^{(\ell)}_{v} \quad \forall \,\ell \in \mb{N}, \quad p \in \{+1,-1\}.
\end{equation}
We prove this by induction.
For $k = 2$ (that is, $n = 0, \ell = \ell' = 0$), \cref{eq:cospCondition}  yields $[\bz^{(0)}_{u}]^2 = [\bz^{(0)}_{v}]^2$ or 
\begin{equation} \label{eq:multipletCondition1}
 \bz^{(0)}_{u} = p \, \bz^{(0)}_{v},
\end{equation}
so that \cref{eq:multipletCondition} is fulfilled in zeroth order $\ell =0$.
For the induction step, we assume that \cref{eq:multipletCondition} is fulfilled up to some arbitrary order $r$, that is, 
\begin{equation} \label{eq:assumption}
 \bz^{(\ell)}_{u} = p \, \bz^{(\ell)}_{v} \quad \forall \, \ell \leqslant r,
\end{equation}
and show that this equation also holds for $\ell = r + 1$.
To this end, we evaluate \cref{eq:cospCondition} for $k=r+3$.
For this choice of $k$, all but two summands vanish, since the assumption \cref{eq:assumption} implies that $\diffb_{\ell,\ell'} = 0$ if $\ell, \ell' \leqslant r$.
We thus obtain $\diffb_{0,r+1} + \diffb_{r+1,0} = 0$, and since $\diffb_{0,r+1} = \diffb_{r+1,0}$, it follows that $\bz^{(0)}_{u} \bz^{(r+1)}_{u} = \bz^{(0)}_{v}\bz^{(r+1)}_{v}$.
Thus, if $\bz^{(0)}_{u} \neq 0$, due to \cref{eq:multipletCondition1} we get $\bz^{(r+1)}_{u} = p \bz^{(r+1)}_{v}$, as desired.

If $\bz^{(0)}_{u} = 0$, it follows from \cref{eq:multipletCondition1} that also $\bz^{(0)}_{v} = 0$, and from \cref{eq:shorthands} we obtain $\diffb_{0,\ell} = \diffb_{\ell,0} = 0$ for all $\ell$.
We exploit this fact by evaluating \cref{eq:cospCondition} for $k=r+4$, yielding $\diffb_{1,r+1} = \diffb_{r+1,1} = 0$.
Now, if $\bz^{(1)}_{u} \neq 0$ we again get $\bz^{(r+1)}_{u} = p \bz^{(r+1)}_{v}$, as desired.
If $\bz^{(1)}_{u} = 0$, we proceed to the next higher order $k = r + 5$, and so on.
In the limiting case where $\bz^{(\ell)}_{u} = 0$ for all $\ell \leqslant r$, we evaluate \cref{eq:cospCondition} for $k = 2(r + 2)$, which yields $\diffb_{r+1,r+1} = 0$ and therefore $\bz^{(r+1)}_{u} = p \bz^{(r+1)}_{v}$.
This completes the proof of the first part.

For part (ii), we first prove that, if $c$ is a singlet in $\hamz$, then its removal does not break the cospectrality of $u$ and $v$.
To this end, we use the fact that
\begin{align}
\comp{\hamz^{k}}{u}{u} 
= & 
\comp{\hamt^{k}}{u}{u} + 
\sum_{\ell + n = k} \; \comp{\hamz^{\ell}}{u}{c} \comp{\hamz^{n}}{c}{u} \\
= & 
\comp{\hamt^{k}}{u}{u} + 
\sum_{\ell + n = k} \; \comp{\hamz^{\ell}}{v}{c} \comp{\hamz^{n}}{c}{v}
\end{align}
and
\begin{align}
\comp{\hamz^{k}}{v}{v} 
= & 
\comp{\hamt^{k}}{v}{v} + 
\sum_{\ell + n = k} \; \comp{\hamz^{\ell}}{v}{c} \comp{\hamz^{n}}{c}{v}.
\end{align}
Since $u$ and $v$ are cospectral in $\hamz$, it follows that $\comp{\hamt^{k}}{u}{u} = \comp{\hamt^{k}}{v}{v}$ for all $k$, so that $u,v$ are also cospectral in $\hamt$ if $c$ is a singlet in $\hamz$.
For the reverse direction we need to prove that, if the cospectrality of $u$ and $v$ is preserved by the removal of a single vertex $c$, then this vertex must be a walk singlet.
With $\hamz$ being a cone of $\hamt$ with tip $c$, and demanding $u,v$ to be cospectral in both $\hamt$ and $\hamz$, combining part (i) of this theorem with \cref{thm:multipletExtension} immediately gives that $c$ must be a singlet in $\hamz$.

\end{proof}

\subsection*{Comment: Removal of a general multiplet}

\noindent
Consider removing a walk multiplet $\mul^p_\gamma$ instead of the singlet $c$.
Then
\begin{align}
\comp{\hamz^{k}}{u}{u} 
= & 
\comp{\hamt^{k}}{u}{u} + 
\sum_{\ell + r + n = k} 
\sum_{m, m' \in \mul} \; \comp{\hamz^{\ell}}{u}{m} \comp{\hamz^{r}}{m}{m'} \comp{\hamz^{n}}{m'}{u}
\end{align}
and cospectrality is preserved in the resulting graph $\hamt$ only if
\begin{align} \label{eq:multipletRemoval}
\sum_{\ell + r + n = k} 
\sum_{m, m' \in \mul} \; \comp{\hamz^{\ell}}{u}{m} \comp{\hamz^{r}}{m}{m'} \comp{\hamz^{n}}{m'}{u}
=
\sum_{\ell + r + n = k} 
\sum_{m, m' \in \mul} \; \comp{\hamz^{\ell}}{v}{m} \comp{\hamz^{r}}{m}{m'} \comp{\hamz^{n}}{m'}{v} 
\end{align}
for all $k$.
Thus, cospectrality of a vertex pair in a graph is generally not preserved when removing a multiplet, except if \cref{eq:multipletRemoval} if fulfilled.
Assuming a uniform multiplet ($\gamma_m = 1$ for all $m \in \mul$),  a special case where this occurs is when all pairs $\{m,m'\}$ in $\mul$ are cospectral and, for each such pair, all remaining vertices $m'' \notin \{m,m'\}$ in $\mul$ are singlets relative to $\{m,m'\}$.
Then $\comp{\hamz^{r}}{m}{m'}$ can be factored out of the sums in \cref{eq:multipletRemoval} (taken separately for $m = m'$ and $m \neq m'$) and equality follows from the multiplet condition \cref{eq:multipletCondition}, for both $p = \pm 1$.
For a walk quadruplet, e.\,g., the elements $\comp{\hamz^{r}}{m}{m'}$ would have the form
\begin{equation}
\comp{\hamz^{r}}{\mul}{\mul} =
 \begin{bmatrix}
a_r  & b_r  & b_r  & b_r  \\
b_r  & a_r  & b_r  & b_r  \\
b_r  & b_r  & a_r  & b_r  \\
b_r  & b_r  & b_r  & a_r \\
\end{bmatrix},
\end{equation}
with the values $a_r$ and $b_r$ generally depending on the power $r$.
For a uniform $p$-doublet, this reduces to $\comp{\hamz^{r}}{m}{m} = \comp{\hamz^{r}}{m'}{m'}$ and $\comp{\hamz^{r}}{m}{m'} = \comp{\hamz^{r}}{m'}{m}$.
This is the case, e.\,g., for the walk anti-doublet $\{3,4\}^-$ in \cref{fig:exampleUniformMultiplets}\,(a) which can be removed without affecting the cospectrality of the pair $\{1,2\}$.

\multipletZeroSum*
\begin{proof}
Using the spectral decomposition 
\begin{equation} \label{eq:specDecomposition}
 \ham = \sum_{\nu = 1}^N \lambda_\nu \phi^{{\nu}} \phi^{\nu\top}
\end{equation}
of $H$ in the orthonormal eigenbasis $\{\phi^{{\nu}}\}$, chosen according to \cref{lem:eigenvectorChoice}, we have, for $s \in \{u,v\}$,
\begin{align} \label{eq:powersSpecDecomposition}
\comp{\ham^k}{s}{m} = 
\sum_{\nu = 1}^{N} \lambda_\nu^k \phi^{{\nu}}_s \phi^{{\nu}}_m = 
\sum_{\nu \in \mathcal{N}^+} \lambda_\nu^k  \phi^{{\nu}}_s \phi^{{\nu}}_m +
\sum_{\nu \in \mathcal{N}^-} \lambda_\nu^k  \phi^{{\nu}}_s \phi^{{\nu}}_m \quad \forall k \in \mathbb{N},
\end{align}
where we have collected the labels $\nu$ of eigenvectors with parity $\pm 1$ on $\{u,v\}$ into the set $\mathcal{N}^\pm$ (the remaining eigenvectors with $\phi^{{\nu}}_u = \phi^{{\nu}}_v = 0$ do not appear in the sum).
Note that \cref{eq:powersSpecDecomposition} incorporates the spectral decomposition of the identity matrix, $I_{s,m} = 
\sum_{\nu = 1}^{N} \phi^{{\nu}}_s \phi^{{\nu}}_m$ for $k = 0$, meaning that $\lambda_{\nu}^0 = 1$ even in the case of zero eigenvalues. 
Next we calculate:
\begin{align}
\comp{\ham^k}{u}{m} - p \comp{\ham^k}{v}{m} 
& =
(1-p) \sum_{\nu \in \mathcal{N}^+} \lambda_\nu^k  \phi^{{\nu}}_u \phi^{{\nu}}_m +
(1+p) \sum_{\nu \in \mathcal{N}^-} \lambda_\nu^k  \phi^{{\nu}}_u \phi^{{\nu}}_m \\
& = 
2 \sum_{\nu \in \mathcal{N}^{-p}} \lambda_\nu^k  \phi^{{\nu}}_u \phi^{{\nu}}_m,
\end{align}
where we used the parity of eigenstates on $\{u,v\}$, i.\,e. $\phi^{{\nu}}_u = \pm \phi^{{\nu}}_v$ for $\nu \in \mathcal{N}^{\pm}$, so that the prefactor $(1 \mp p)$ of the sum over $\nu \in \mathcal{N}^\pm$ vanishes for $p = \pm 1$.
Multiplying by $\gamma_m$ and summing over $m \in \mul$ we obtain 
\begin{equation} \label{eq:multipletEigenvectorZeroSum}
\bmul{k}{u}{\mul}  - p \, \bmul{k}{v}{\mul}
= 2 \sum_{\nu \in \mathcal{N}^{-p}} \lambda_\nu^k  \phi^{{\nu}}_u 
\sum_{m \in \mul} \gamma_m \phi^{{\nu}}_m,
\end{equation}
with $\bmul{k}{s}{\mul}$ defined as in \cref{eq:multipletShortcut}.
It follows that, if \cref{eq:multipletAmplitudes} is fulfilled with $\phi = \phi^{{\nu}}$, for all $\nu \in \mathcal{N}^{-p}$, then $\bmul{k}{u}{\mul} = p \, \bmul{k}{v}{\mul}$ for all $ k$ and thus $\mul_\gamma^p$ is a walk multiplet relative to $\{u,v\}$ with parity $p$.
Conversely, if $\mul_\gamma^p$ is a multiplet, then the left side of \cref{eq:multipletEigenvectorZeroSum} vanishes for all $ k \in \mathbb{N}$.
For $k \in [\![ 0, n_p-1 ]\!]$, where $n_p \equiv |\mathcal{N}^{-p}|$, we can write \cref{eq:multipletEigenvectorZeroSum} in the matrix form
\begin{equation}
V \vec{c} \equiv
\begin{bmatrix}
1           & 1           & \cdots & 1    \\
\lambda_1   & \lambda_2   & \cdots & \lambda_{n_p} \\
\lambda_1^2 & \lambda_2^2 & \cdots & \lambda_{n_p}^2 \\
\vdots      & \vdots      & \ddots &  \vdots  \\
\lambda_1^{n_p-1} & \lambda_2^{n_p-1} &\cdots  & \lambda_{n_p}^{n_p-1} \\
\end{bmatrix}
\begin{bmatrix}
c_1 \\
c_2 \\
\vdots \\
c_{n_p} \\
\end{bmatrix}
= 
\begin{bmatrix}
0 \\
0 \\
\vdots \\
0 \\
\end{bmatrix},
\end{equation}
with coefficients $c_\nu = 2 \phi^{{\nu}}_u \sum_{m \in \mul} \gamma_m \phi^{{\nu}}_m$, where $V^\top$ is the (square) Vandermonde matrix with $\comp{V^\top}{i}{j} = \lambda_i^{j-1}$, yielding
\begin{equation} \label{eq:matrixV}
\det(V) 
= \det(V^\top)
= \prod_{1 \leqslant \mu < \nu \leqslant  n_p}(\lambda_\nu - \lambda_\mu).
\end{equation}
Now, our choice of eigenvectors ensures that 
$\lambda_\nu \neq \lambda_\mu$ for all $\nu \neq \mu$ with $\nu,\mu \in \mathcal{N}^{-p}$, so that $\det(V) \neq 0$.
Thus $V$ is invertible, so that \cref{eq:matrixV} yields $c_\nu = 0$ for all $\nu \in \mathcal{N}^{-p}$, and since $\phi^{{\nu}}_u \neq 0$ we have that $\sum_{m \in \mul} \gamma_m \phi^{{\nu}}_m = 0$ for all $\nu \in \mathcal{N}^{-p}$, completing the proof. 

\end{proof}

\end{document}